\newtheorem{thm}{Theorem}[section]
\newtheorem{cor}[thm]{Corollary}
\newtheorem{lem}[thm]{Lemma}
\newtheorem{prop}[thm]{Proposition}
\theoremstyle{definition}
\newtheorem{rem}[thm]{Remark}
\begin{document}

%
%

\newcommand{\BC}{\mathbb{C}}
\newcommand{\BR}{\mathbb{R}}
\newcommand{\BN}{\mathbb{N}}
\newcommand{\BZ}{\mathbb{Z}}

%
%

\title{Uniqueness of the solution of nonlinear singular 
first order partial differential equations}

\author{{} \\
Hidetoshi \textsc{TAHARA}\footnote{Department of Information and 
        Communication Sciences, Sophia University, Tokyo 
        102-8554, Japan.
        e-mail: \texttt{h-tahara@sophia.ac.jp}}}

\date{}

\maketitle

\begin{abstract}
    This paper deals with nonlinear singular partial differential
equations of the form 
$t \partial u/\partial t=F(t,x,u,\partial u/\partial x)$ with
independent variables $(t,x) \in \BR \times \BC$, where 
$F(t,x,u,v)$ is a function continuous in $t$ and holomorphic in 
the other variables. Under a very weak assumption we show the 
uniqueness of the solution of this equation. The results are 
applied to the problem of analytic continuation of local holomorphic
solutions of equations of this type.
\end{abstract}

{\it Key words and phrases}: uniqueness of the solution, 
nonlinear partial differential equation, first order equations.

{\it 2010 Mathematics Subject Classification Numbers}: 
      Primary 35A02; Secondary 35F20, 35B60.

\renewcommand{\thefootnote}{\fnsymbol{footnote}}

\footnote[0]{
      This work was supported by JSPS KAKENHI Grant Number 
        JP15K04966.}

%
%
%
%

\section{Introduction}\label{section1}
%

   To investigate the uniqueness of the solution is one of the most
important problems in the theory of partial differential equations,
and there are many references in various situations.
In this paper, we consider the case of first order nonlinear singular 
partial differential equations (\ref{1.1}) given below, and show uniqueness 
results by a method quite similar to the Cauchy's 
characteristic method. 
\par
   Let $t \in \BR$, $x \in \BC$, $u \in \BC$ and $v \in \BC$ be 
the variables.  For $r>0$ we write $D_r=\{z \in \BC \,;\, |z|<r \}$
where $z$ represents $x$, $u$ or $v$. Let $T_0>0$, $R_0>0$, $\rho_0>0$, 
and set
$\Omega=\{(t,x,u,v) \in [0,T_0] \times D_{R_0} \times D_{\rho_0} 
\times D_{\rho_0} \}$.
\par
   Let $F(t,x,u,v)$ be a function on $\Omega$. In this paper, we 
consider the equation

\begin{equation}\label{1.1}
   t \,\frac{\partial u}{\partial t}
   = F \Bigl( t,x,u, \frac{\partial u}{\partial x} \Bigr)
\end{equation}
under the following assumptions:
\par
\medskip
   ${\rm A}_1)$ $F(t,x,u,v)$ is a continuous function on 
$\Omega$ which is holomorphic in the variable 
$(x,u,v) \in D_{R_0} \times D_{\rho_0} \times D_{\rho_0}$ for any 
fixed $t$.
\par
   ${\rm A}_2)$ There is a weight function $\mu(t)$ on
$(0,T_0]$ satisfying the following:
\begin{align*}
    &\sup_{x \in D_{R_0}}|F(t,x,0,0)|=O(\mu(t)) 
                \quad \mbox{(as $t \longrightarrow +0$)}, \\
    &\Bigl|\frac{\partial F}{\partial v}
           (t,0,0,0) \Bigr|=O(\mu(t)) 
           \quad \mbox{(as $t \longrightarrow +0$)}.
\end{align*}
\par
    Here, a weight function $\mu(t)$ on $(0,T_0]$ means that $\mu(t)$
is a positive-valued continuous function on $(0,T_0]$ which is 
increasing in $t$ and satisfies
\[
      \int_0^{T_0} \frac{\mu(s)}{s} ds < \infty.
\]
By this condition, we have $\mu(t) \longrightarrow 0$ 
(as $t \longrightarrow +0$). 
\par
   By ${\rm A}_2)$ we can express 
$(\partial F/\partial v)(t,x,0,0)$ in the form
\[
   \frac{\partial F}{\partial v}(t,x,0,0)
    = b(t)+x^{p+1}c(t,x)
\]
where $b(t)$ is a continuous function on $[0,T_0]$ satisfying
$b(t)=O(\mu(t))$ (as $t \longrightarrow +0$), 
$c(t,x)$ is a continuous function on 
$[0,T_0] \times D_{R_0}$ that is holomorphic in $x$, 
and $p \in \{0,1,2,\ldots \}$. Then, we can divide our situation into 
the following three cases:
\par
\medskip \qquad
   Case 1. $c(t,x) \equiv 0$ on $[0,T_0] \times D_{R_0}$,
\par  \qquad
   Case 2. $p=0$ and $c(t,0) \not\equiv 0$ on $[0,T_0]$,
\par  \qquad
   Case 3. $p \geq 1$ and $c(t,0) \not\equiv 0$ on $[0,T_0]$.
\par
\medskip
\noindent
In Case 1, equation (\ref{1.1}) is a generalization of Briot-Bouquet's
ordinary differential equations (in Briot-Bouquet \cite{briot})
to partial differential equations, and this type of equations was
studied by Baouendi-Goulaouic \cite{BG}, G\'{e}rard-Tahara \cite{gt1}, 
Yamazawa \cite{yamazawa}, Koike \cite{koike} and 
Lope-Roque-Tahara \cite{LRT}.
In Case 2, equation (\ref{1.1}) has a regular singularity at $x=0$, 
and this type of equations was studied by Chen-Tahara \cite{chen1} 
and Bacani-Tahara \cite{BT1}.
In Case 3, equation (\ref{1.1}) has an irregular singularity at $x=0$,
and this type of equations was studied by Chen-Luo-Zhang \cite{CLZ},
Luo-Chen-Zhang \cite{LCZ} and Bacani-Tahara \cite{BT2}.
In these papers, mainly the solvability (or the unique solvability)
of equation (1.1) is discussed.
\par
   As to the uniqueness of the solution, we know some results:
in Case 1 we have a result in Tahara \cite{unique} under the assumption:
$u(t,x)=O(\mu(t)^{\epsilon})$ (as $t \longrightarrow +0$) for some 
$\epsilon>0$, and in Case 2 we have a result in Tahara \cite{unique2} 
under the assumption: $u(t,x)=O(|t|^\epsilon)$ (as $t \longrightarrow +0$) 
for some $\epsilon>0$.
\par
   In this paper, we will show the uniqueness of the solution in 
each case under a much weaker assumption like
\[
     \varlimsup_{R \to +0} \, \biggl[ \, 
          \lim_{\sigma \to +0} \, \Bigl( \frac{1}{R^2}
       \sup_{(0,\sigma) \times D_R}|u(t,x)| \Bigr) \, \biggr]
       \, = \, 0.
\]

%
%
%

\section{Analysis in Case 1}\label{section2}
%

   Let us consider Case 1 in a little bit general setting.
We consider equation (\ref{1.1}) under the following assumptions:
\begin{align}
    &\sup_{x \in D_{R_0}}|F(t,x,0,0)|=O(\mu(t)) 
                \quad \mbox{(as $t \longrightarrow +0$)},
          \label{2.1}\\
    &\sup_{x \in D_{R_0}} \Bigl|\frac{\partial F}{\partial v}
           (t,x,0,0) \Bigr|=O(\mu(t)) 
           \quad \mbox{(as $t \longrightarrow +0$)}. 
          \label{2.2}
\end{align}
\par
   As to the existence of a solution, we know a unique solvability 
result in a certain function space. To state the existence result, 
let us prepare some notations.  We set
\[
      \varphi(t)= \int_0^t \frac{\mu(s)}{s} ds, \quad 0<t \leq T_0.
\]
This is also an increasing function on $(0,T_0]$ and we have
$\varphi(t) \longrightarrow 0$ (as $t \longrightarrow +0$). For $T>0$,
$R>0$ and $r>0$ we set
\[
    W_{T,R,r}=\{(t,x) \in [0,T] \times \BC \,;\, 
       \varphi(t)/r + |x|<R \}.
\]
For $W=W_{T,R,r}$, we denote by ${\mathscr X}_0(W)$ the set of all 
functions in $C^0(W)$ that are holomorphic in $x$ for any fixed $t$, 
and by ${\mathscr X}_1(W)$ the set of all functions in 
$C^1(W \cap \{ t>0 \}) \cap C^0(W)$ that are also holomorphic in $x$ 
for any fixed $t$. We set
\[
     \lambda(t,x)= \frac{\partial F}{\partial u}(t,x,0,0).
\]
By [Theorem 1.1 (with $\alpha=1$) in Lope-Roque-Tahara \cite{LRT}] 
we have

\begin{thm}\label{Theorem2.1}
    Suppose the conditions {\rm (\ref{2.1})} and {\rm (\ref{2.2})}.
If ${\rm Re}\lambda (0,0)<0$ holds, there are $T>0$, $R>0$ and $r>0$ 
such that equation {\rm (\ref{1.1})} has a unique solution
$u_0(t,x) \in {\mathscr X}_1(W_{T,R,r})$ satisfying
\[
    |u_0(t,x)| \leq M\mu(t) \quad \mbox{and} \quad 
    \Bigl| \frac{\partial u_0}{\partial x}(t,x) \Bigr| 
       \leq M\mu(t)
\]
on $W_{T,R,r}$ for some $M>0$.
\end{thm}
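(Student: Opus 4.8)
The statement coincides with [Theorem~1.1 (case $\alpha=1$) in Lope--Roque--Tahara \cite{LRT}], so the plan is to check that (\ref{2.1}), (\ref{2.2}) together with ${\rm Re}\,\lambda(0,0)<0$ are precisely the hypotheses required there; I also sketch the Cauchy-characteristic argument that underlies it, since the same construction is the natural starting point for the uniqueness results to follow. First I would expand $F$ in $(u,v)$ about the origin,
\[
   F(t,x,u,v)=a(t,x)+\lambda(t,x)\,u+b(t,x)\,v+G(t,x,u,v),
\]
where $a(t,x)=F(t,x,0,0)$, $b(t,x)=(\partial F/\partial v)(t,x,0,0)$, and $G$ collects the terms of order $\geq 2$ in $(u,v)$; by holomorphy of $F$ and Cauchy's estimates, on a sufficiently small polydisc $G(t,x,u,v)=O((|u|+|v|)^2)$ and $G$ is Lipschitz in $(u,v)$ with constant $O(|u|+|v|+|\widetilde u|+|\widetilde v|)$, all uniformly in $(t,x)$. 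Hypotheses (\ref{2.1}), (\ref{2.2}) give $\sup_x|a(t,x)|=O(\mu(t))$ and $\sup_x|b(t,x)|=O(\mu(t))$ as $t\to+0$.

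Next I would run a fixed point iteration on the ball
\[
  B_M=\bigl\{\,w\in{\mathscr X}_1(W_{T,R,r}):\ |w(t,x)|\le M\mu(t)\ \text{and}\ \bigl|(\partial w/\partial x)(t,x)\bigr|\le M\mu(t)\,\bigr\},
\]
assigning to $w$ the solution $u=\Phi(w)$ of the \emph{linear} first order equation $t\,\partial u/\partial t-\lambda(t,x)u-b(t,x)\,\partial u/\partial x=a(t,x)+G(t,x,w,\partial w/\partial x)=:g(t,x)$, solved by characteristics. Passing to $s=\log t$ makes the left side $\partial_s u-\lambda u-b\,\partial_x u$, and the characteristic curves $dx/ds=-b$ have total variation $O\bigl(\int_0^t\mu(\tau)\,d\tau/\tau\bigr)=O(\varphi(t))$; this is exactly why $W_{T,R,r}=\{\varphi(t)/r+|x|<R\}$ is the right domain — characteristics issued from a point of $W_{T,R,r}$ remain inside it and converge as $t\to+0$ because $\int_0^{T_0}\mu(s)\,ds/s<\infty$. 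Integrating along a characteristic with the integrating factor $\exp\bigl(-\int\lambda\,ds\bigr)$, which stays bounded once $T,R$ are small, thanks to ${\rm Re}\,\lambda(0,0)<0$, yields $|u(t,x)|\le C\sup|g/\mu|\cdot\mu(t)$; differentiating the equation in $x$ and repeating (the new terms $(\partial_x\lambda)u+(\partial_x b)\partial_x u+\partial_x a+\partial_x G$ are again $O(\mu(t))$ on a slightly smaller disc by Cauchy's estimates) gives the companion bound for $\partial_x u$. Since $\sup|g/\mu|\le C_0+O(M^2\mu(T))$, taking $M$ large and then $T$ small gives $\Phi(B_M)\subset B_M$.

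Finally, the quadratic nature of $G$ produces the contraction: $\Phi(w_1)-\Phi(w_2)$ solves the same linear equation with right side $G(t,x,w_1,\partial_x w_1)-G(t,x,w_2,\partial_x w_2)$, whose $\mu$-weighted norm is $\le C\,M\,\mu(T)\,\|w_1-w_2\|$, so for $T$ small $\Phi$ is a contraction on $B_M$ in the norm $\sup_{W_{T,R,r}}\bigl(|w|/\mu(t)+|\partial_x w|/\mu(t)\bigr)$; Banach's fixed point theorem then gives the unique $u_0\in B_M$, and the same Gronwall-type estimate along characteristics shows it is the unique solution in ${\mathscr X}_1(W_{T,R,r})$ with the stated bounds. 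The main obstacle is the boundedness of the linear solution operator on the $\mu(t)$-weighted space: one has to ensure that the characteristics exist and stay inside $W_{T,R,r}$ all the way down to $t=0$ (convergence of $\int_0^{T_0}\mu(s)\,ds/s$) and that the integrating factor is bounded (${\rm Re}\,\lambda(0,0)<0$), and to close these estimates for $u$ and $\partial_x u$ simultaneously so that the iteration is self-consistent.
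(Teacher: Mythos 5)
The paper gives no proof of this statement at all: it is quoted directly from [Theorem 1.1 with $\alpha=1$] of Lope--Roque--Tahara \cite{LRT}, which is exactly the reduction you make, so your proposal takes the same approach as the paper. Your supplementary sketch of the underlying argument is a reasonable outline, but note that the step where you control $\partial u/\partial x$ by differentiating the equation and invoking Cauchy's estimates ``on a slightly smaller disc'' conceals the loss-of-derivatives difficulty that the actual proof in \cite{LRT} resolves via a Nagumo-norm contraction adapted to the shrinking domains $W_{T,R,r}$; as written, that iteration would not close in a fixed sup-norm.
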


%
%
%

\subsection{Uniqueness result in Case 1}\label{subsection2.1}
%

   For $T>0$ and $R>0$ we denote by ${\mathscr X}_1((0,T) \times D_R)$
the set of all functions in $C^1((0,T) \times D_R)$ that are 
holomorphic in the variable $x \in D_R$ for any fixed $t$. 
\par
   The following theorem is the main result of this section.

\begin{thm}\label{Theorem2.2} 
     Suppose the conditions {\rm (\ref{2.1})}, {\rm (\ref{2.2})} and
${\rm Re}\lambda(0,0)<0$.
Let $u(t,x) \in {\mathscr X}_1((0,T) \times D_R)$ be a solution of 
{\rm (\ref{1.1})} with $T>0$ and $R>0$. If $u(t,x)$ satisfies
\begin{equation}\label{2.3}
     \varlimsup_{R \to +0} \, \biggl[ \, 
          \lim_{\sigma \to +0} \, \Bigl( \frac{1}{R^2}
       \sup_{(0,\sigma) \times D_R}|u(t,x)| \Bigr) \, \biggr]
       \, = \, 0, 
\end{equation}
we have $u(t,x)=u_0(t,x)$ on $(0,T_1) \times D_{R_1}$ for 
some $T_1>0$ and $R_1>0$, where $u_0(t,x)$ is the solution 
of {\rm (\ref{1.1})} obtained in Theorem \ref{Theorem2.1}.
\end{thm}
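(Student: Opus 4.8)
The plan is to subtract the known solution $u_0$, reduce to a uniqueness statement for a linear singular equation, and apply the method of characteristics to it.

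First I would work near the origin. By Theorem~\ref{Theorem2.1}, $|u_0|$ and $|\partial u_0/\partial x|$ are $O(\mu(t))$; and (\ref{2.3}) says precisely that for each $\varepsilon>0$ one may choose $R$ and then $\sigma$ so that $|u(t,x)|\le\varepsilon R^2$ on $(0,\sigma)\times D_R$, whence Cauchy's estimate gives $|\partial u/\partial x|\le 2\varepsilon R$ on $(0,\sigma)\times D_{R/2}$. Thus on a small enough set both $(t,x,u,\partial u/\partial x)$ and $(t,x,u_0,\partial u_0/\partial x)$ stay in an arbitrarily small neighbourhood of $0$ in $\Omega$, so all the Lipschitz bounds for $F$, $\partial F/\partial u$, $\partial F/\partial v$ are available. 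Putting $w=u-u_0$ and applying the mean value theorem, $w$ solves
\[
   t\,\frac{\partial w}{\partial t}=A(t,x)\,w+B(t,x)\,\frac{\partial w}{\partial x},
\]
where $A$ and $B$ are the $\theta$-averages of $\partial F/\partial u$ and $\partial F/\partial v$ over the segment from $(u_0,\partial u_0/\partial x)$ to $(u,\partial u/\partial x)$. Since all these arguments tend to $0$ and $(\partial F/\partial u)(0,0,0,0)=\lambda(0,0)$ has negative real part, $\operatorname{Re}A(t,x)\le-\delta<0$ there; and by (\ref{2.2}), $B(t,x)=(\partial F/\partial v)(t,x,0,0)+(\text{remainder})$ with $\sup_x|(\partial F/\partial v)(t,x,0,0)|=O(\mu(t))$ and $|\text{remainder}|\le C\bigl(|u|+|u_0|+|\partial u/\partial x|+|\partial u_0/\partial x|\bigr)$.

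The characteristic curves of the linear equation solve $dt/ds=t$ and $dx/ds=-B(t,x)$, so $t(s)=t_0e^{s}$, and along such a curve $dw/ds=A(t(s),x(s))\,w$; hence for $s\le 0$,
\[
   w(t_0,x_0)=w(t(s),x(s))\,\exp\Bigl(\int_s^0 A\,d\tau\Bigr),\qquad
   \Bigl|\exp\Bigl(\int_s^0 A\,d\tau\Bigr)\Bigr|\le e^{\delta s}.
\]
If the backward curve stays in the small set and $t(s)\to 0$ as $s\to-\infty$, then $w(t(s),x(s))$ remains bounded while $e^{\delta s}\to 0$, forcing $w(t_0,x_0)=0$; as $(t_0,x_0)$ ranges over a fixed $(0,T_1)\times D_{R_1}$ this is exactly the assertion $u\equiv u_0$. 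Whether the curve reaches $t=0$ inside the set is governed by $\int_0|B(t,x(t))|\,dt/t$: the part $(\partial F/\partial v)(t,x,0,0)$ is harmless because $\int_0\mu(t)\,dt/t<\infty$, but the remainder contributes $\lesssim\bigl(\sup|u|+\sup|\partial u/\partial x|\bigr)\log(1/t)$, which need not be finite. So the real point is to improve the crude consequence of (\ref{2.3}) to the genuine decay $|u(t,x)|\le C\mu(t)$ (up to a negligible constant) and $|\partial u/\partial x|\le C\mu(t)$ near the origin; then $B(t,x)=O(\mu(t))$ and the argument closes.

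I expect this improvement to be the main obstacle. It is extracted from the equation itself: writing $t\,\partial u/\partial t-\lambda(t,x)u=g$ with $g=F(t,x,0,0)+(\partial F/\partial v)(t,x,0,0)\,\partial u/\partial x+(\text{quadratic in }u,\partial u/\partial x)$, and inverting the operator $t\partial_t-\lambda$ — which, because $\operatorname{Re}\lambda<0$, sends $O(\mu)$ data to $O(\mu)$ output since $|\exp(\int_s^t\lambda\,d\tau/\tau)|\le(s/t)^{\delta}$ — one closes a Gronwall-type estimate in which the quadratic gradient term, of size $\lesssim\varepsilon\,\sup_{D_R}|u|$ after Cauchy's estimate (this is exactly what the factor $R^2$ in (\ref{2.3}) yields), is absorbed into the damped linear term. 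The delicacy is that the equation is fully nonlinear in $\partial u/\partial x$, so no power-series (Ovsyannikov-type) scheme applies, and a naive iteration would lose the radius of the disc because the Cauchy estimates for $\partial u/\partial x$ pile up; one therefore runs the improvement in finitely many steps, shrinking the radius by only a controlled total amount, and then bootstraps it against the characteristic estimate — each decrease of $\sup|w|$ shrinks the constant part of $B$, which lets the characteristics be followed closer to $t=0$, which decreases $\sup|w|$ again — so that the radius loss stays finite and $w\equiv 0$ on a fixed $(0,T_1)\times D_{R_1}$. Controlling the constants and the remaining radius through this bootstrap is where the bulk of the work lies.
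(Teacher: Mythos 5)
Your reduction $w=u-u_0$, the linearization $t\,\partial w/\partial t=Aw+B\,\partial w/\partial x$, and the backward characteristic argument are exactly the paper's framework (its (2.6), (2.8), (2.12), (2.13)). The gap is at the point you yourself flag as ``the main obstacle'': you bound the contribution of the remainder part of $B$ to $\int_{t_1}^{t_0}|B(\tau,x(\tau))|\,d\tau/\tau$ crudely by $\bigl(\sup|u|+\sup|\partial u/\partial x|\bigr)\log(t_0/t_1)$, conclude the characteristic may escape the disc before reaching $t=0$, and then propose to repair this by first upgrading (2.3) to the genuine decay $|u|\le C\mu(t)$ via a Gronwall/bootstrap scheme. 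That scheme is not carried out (you acknowledge that controlling the constants and the radius loss is ``where the bulk of the work lies''), and as described it is far from obviously closable; so the argument is incomplete precisely at the step that matters.

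The missing idea, which makes any bootstrap unnecessary, is to feed the damping estimate along the characteristic back into the drift estimate. The same ODE that gives $|w^*(\tau)|\le(t_1/\tau)^a|w^*(t_1)|$ (and the analogous bound for $q^*=\partial w/\partial x$ along the curve) yields, with $|B(\tau,x(\tau))|\le B_0\mu(\tau)+B_1|w^*(\tau)|+B_2|q^*(\tau)|$,
\[
   \int_{t_1}^{t_0}|w^*(\tau)|\,\frac{d\tau}{\tau}
   \;\le\; |w^*(t_1)|\int_{t_1}^{t_0}\Bigl(\frac{t_1}{\tau}\Bigr)^{a}\frac{d\tau}{\tau}
   \;\le\;\frac{r_1}{a},
\]
and the $q^*$ term likewise contributes at most $\Gamma r_1/a^2+r_2/a$ --- uniformly in $t_1$, with no logarithm. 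This is the content of the paper's Lemmas 2.7 and 2.8. Hypothesis (2.3), together with Cauchy's estimate for $\partial w/\partial x$, is then used only once: to make $r_1$, $r_2$ so small relative to $R$ that $B_0\varphi(\sigma)+(B_1/a+B_2\Gamma/a^2)r_1+(B_2/a)r_2<R/2$ (Lemma 2.6), so that every backward characteristic issued from $D_{R/2}$ stays in $D_R$ all the way down to $t=0$ (Corollary 2.9). Once $t_\xi=0$, letting $t_1\to+0$ in the damping estimate gives $w(t_0,\xi)=0$ directly. No improvement of the $t$-decay of $u$ is required, and this is exactly why the hypothesis can be as weak as (2.3) rather than $u=O(\mu(t)^{\epsilon})$.
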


   If 
\begin{equation}\label{2.4}
     \lim_{t \to +0} \,\Bigl(\sup_{x \in D_R}|u(t,x)| \Bigr)\,
       = \, 0  
\end{equation}
holds for some $R>0$ we have (\ref{2.3}), and so we have

\begin{cor}\label{Corollary2.3}
    Suppose the conditions {\rm (\ref{2.1})}, {\rm (\ref{2.2})} and
${\rm Re}\lambda(0,0)<0$.
If a solution $u(t,x) \in {\mathscr X}_1((0,T) \times D_R)$ of 
{\rm (\ref{1.1})} satisfies {\rm (\ref{2.4})}, we have 
$u(t,x)=u_0(t,x)$ on $(0,T_1) \times D_{R_1}$ for some $T_1>0$ and 
$R_1>0$.
\end{cor}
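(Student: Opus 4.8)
The plan is to reduce the uniqueness statement to the already-established uniqueness part of Theorem \ref{Theorem2.1} by showing that any solution $u$ satisfying the weak growth condition \eqref{2.3} automatically lives in one of the spaces $\mathscr{X}_1(W_{T,R,r})$ and, in particular, satisfies the bound $|u(t,x)| \leq M\mu(t)$ near $t=0$ on some small domain. The heart of the matter is therefore a bootstrap argument: starting from the a priori knowledge that $u$ is merely $o(R^2)$ on $(0,\sigma)\times D_R$ (with the implied double limit), I will upgrade this first to $u = o(1)$, then to $|u(t,x)| = O(\mu(t))$ together with the companion estimate $|\partial u/\partial x| = O(\mu(t))$, on a possibly shrunken neighbourhood of the origin. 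Once that is done, $u$ and $u_0$ are two solutions in the same function space $\mathscr{X}_1(W_{T_1,R_1,r})$ obeying the same size estimates, and the uniqueness clause of Theorem \ref{Theorem2.1} forces $u \equiv u_0$.

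The mechanism for the bootstrap will be Cauchy's characteristic method, exactly as advertised in the introduction. For a fixed base point, the characteristic curves of \eqref{1.1} are the solutions $x(t)$ of $t\,dx/dt = -(\partial F/\partial v)(t,x,u,\partial u/\partial x)$; along such a curve the solution $u$ satisfies the ODE $t\,d u/dt = F(t,x(t),u,\partial u/\partial x) - (\partial F/\partial v)(\cdots)\,\partial u/\partial x$. I would linearize $F$ around $(t,x,0,0)$, writing $F(t,x,u,v) = F(t,x,0,0) + \lambda(t,x)u + (\partial F/\partial v)(t,x,0,0)v + (\text{higher order})$, so that along a characteristic $u$ solves a scalar ODE of Briot--Bouquet type with leading coefficient $\lambda(0,0)$ of negative real part. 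The condition $\mathrm{Re}\,\lambda(0,0)<0$ makes the trivial solution of the linearized equation attracting as $t\to+0$, which is precisely what lets one integrate the variation-of-constants formula from $+0$ and convert the crude bound on $u$ into the sharp one $O(\mu(t))$; the weight-function hypotheses \eqref{2.1}--\eqref{2.2} and the integrability $\int_0^{T_0}\mu(s)/s\,ds<\infty$ guarantee the forcing terms are integrable against $ds/s$. The estimate on $\partial u/\partial x$ is obtained by differentiating \eqref{1.1} in $x$ and running the same argument on the resulting equation for $\partial u/\partial x$, or alternatively by Cauchy's inequality once $u$ is controlled on a full disc.

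The main obstacle, and the reason condition \eqref{2.3} is stated with that particular $R^{-2}$ normalization and nested limits, is controlling the characteristic curves and the $x$-derivative uniformly: to even write down the characteristic ODE one needs $\partial u/\partial x$ to make sense and to be bounded, and the characteristics must stay inside the domain $D_R$ where $u$ is defined for the whole time interval $(0,t]$ down to $0$. The $R^{-2}$ factor is what makes $\sup_{D_R}|u|$ small compared to $R^2$, so that by Cauchy's estimate $\sup_{D_{R/2}}|\partial u/\partial x|$ is small compared to $R$; this smallness, fed into $t\,dx/dt = -(\partial F/\partial v)(\cdots)$ together with $(\partial F/\partial v)(t,0,0,0)=O(\mu(t))$ and the integrability of $\mu(s)/s$, keeps the characteristic displacement bounded by a small multiple of $R$, hence inside the disc. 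So the careful ordering is: (i) fix $R$ small; (ii) use \eqref{2.3} to get $\sup_{(0,\sigma)\times D_R}|u| \leq \varepsilon R^2$ for all small $\sigma$; (iii) apply Cauchy's inequality to bound $\partial u/\partial x$ by $C\varepsilon R$ on a slightly smaller disc; (iv) construct the characteristic curves through points of $D_{R/2}$ and verify they remain in $D_R$; (v) integrate the ODE for $u$ along characteristics from $+0$ using $\mathrm{Re}\,\lambda(0,0)<0$ to get $|u(t,x)|\leq M\mu(t)$; (vi) invoke the uniqueness in Theorem \ref{Theorem2.1}. I expect step (iv)—showing the characteristics do not escape and the map from base points to initial-time footpoints is well behaved—to be the technically heaviest part, since it couples the unknown $u$, its derivative, and the flow, and must be closed by a continuity/bootstrap argument on the time interval.

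For the Corollary, the plan is immediate: the hypothesis \eqref{2.4} says $\sup_{D_R}|u(t,x)|\to 0$ as $t\to+0$ for a fixed $R>0$, so for every smaller radius $R'\le R$ the inner limit $\lim_{\sigma\to+0}\sup_{(0,\sigma)\times D_{R'}}|u|=0$, hence the bracketed quantity in \eqref{2.3} vanishes for all $R'$ and a fortiori its $\varlimsup$ as $R'\to+0$ is $0$. Thus \eqref{2.4} implies \eqref{2.3}, and Theorem \ref{Theorem2.2} applies verbatim to give $u\equiv u_0$ on $(0,T_1)\times D_{R_1}$.
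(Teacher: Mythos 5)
Your final paragraph is the entire proof this Corollary needs, and it coincides with the paper's own one-line argument: condition (\ref{2.4}) forces $\lim_{\sigma\to+0}\sup_{(0,\sigma)\times D_{R'}}|u|=0$ for every $R'\le R$ (by monotonicity of the supremum in $\sigma$ and in the radius), hence the bracketed quantity in (\ref{2.3}) is identically $0$ and Theorem \ref{Theorem2.2} gives the conclusion. The three preceding paragraphs, which re-sketch a proof of Theorem \ref{Theorem2.2} via a bootstrap to the uniqueness clause of Theorem \ref{Theorem2.1}, are unnecessary for the Corollary and do not match how the paper actually proves Theorem \ref{Theorem2.2} (which works directly with $w=u-u_0$ along characteristics rather than upgrading $u$ to an $O(\mu(t))$ bound), but since you ultimately invoke Theorem \ref{Theorem2.2} as a black box, the argument is correct as it stands.
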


    If a solution $u(t,x)$ satisfies 
\begin{equation}\label{2.5}
     \sup_{x \in D_R}|u(t,x)|=O(\mu(t)^{\epsilon})
     \quad \mbox{(as $t \longrightarrow +0$)}
\end{equation}
for some $\epsilon >0$, we can apply a result in Tahara \cite{unique}.
We note that the condition (\ref{2.3}) is much weaker than (\ref{2.5}).
In \cite{unique} higher order equations are dealt with, but it is 
unclear whether we can generalize Theorem \ref{Theorem2.2} to higher 
order case.

\begin{rem}\label{Remark2.4}
    (1) In the case ${\rm Re}\lambda(0,0)>0$, we can
give many examples in holomorphic category such that the equation has 
many solutions satisfying (\ref{2.4}). Therefore, the uniqueness of the
solution is not valid in general. See \cite {gt1} and \cite{yamazawa}.
\par
   (2) In the case ${\rm Re}\lambda(0,0)=0$, we have the
following counter example: the equation
\[
   t \, \frac{\partial u}{\partial t}
    = u \Bigl(\frac{\partial u}{\partial x} \Bigr)^k
     \quad \mbox{($k \in \{1,2,\ldots \}$)}
\]
has a trivial solution $u \equiv 0$ and a family of nontrivial 
solutions
\[
    u = \Bigl( \dfrac{1}{k} \Bigr)^{1/k}
          \dfrac{x + \alpha}{( c - \mbox{log}\,t )^{1/k}}
\]
with arbitrary constants $\alpha$ and $c$. These solutions satisfy 
(2.4).
\par
   (3) The following example shows that the assumption (\ref{2.3}) 
is reasonable: the equation
\[
     t \frac{\partial u}{\partial t}
     = -u+ \Bigl(\frac{\partial u}{\partial x} \Bigr)^2
\]
has a trivial solution $u \equiv 0$ and a nontrivial solution 
$u=x^2/4$.  We note that for $u=x^2/4$ we have
\[
     \varlimsup_{R \to +0} \, \biggl[ \, 
          \lim_{\sigma \to +0} \, \Bigl( \frac{1}{R^2}
       \sup_{(0,\sigma) \times D_R}|u(t,x)| \Bigr) \, \biggr]
       =  \frac{1}{4}.
\]
\end{rem}

%
%

\subsection{Proof of Theorem 2.2}\label{subsection2.2}

   Let $u_0(t,x)$ be the unique solution of (\ref{1.1}) obtained in 
Theroem \ref{Theorem2.1}. Set $v_0(t,x)=(\partial u_0/\partial x)(t,x)$.
Then, by setting $w=u-u_0$, our equation (\ref{1.1}) is reduced to an 
equation with respect to $w=w(t,x)$:
\begin{equation}\label{2.6}
    t \,\frac{\partial w}{\partial t}
     = H \Bigl(t,x,w,\frac{\partial w}{\partial x} \Bigr)
\end{equation}
where
\begin{align*}
    H(t,x,w,q)= &F(t,x,w+u_0(t,x), q+v_0(t,x)) \\
        &\qquad \qquad \qquad   - F(t,x,u_0(t,x), v_0(t,x)).
\end{align*}
\par
    For $\Omega^*=\{t,x,u,v) \in 
[0,\sigma^*] \times D_{R_0^*} \times D_{\rho_0^*}
\times D_{\rho_0^*} \}$ we denote by ${\mathscr X}_0(\Omega^*)$ the 
set of all functions in $C^0(\Omega^*)$ that are holomorphic in the 
variable $(x,w,q)$ for any fixed $t$.
\par
   Then, we may suppose that $H(t,x,w,q)$ belongs to 
${\mathscr X}_0(\Omega^*)$ for sufficiently small $\sigma^*>0$,
$R_0^*>0$ and $\rho_0^*>0$.  It is easy to see that $H(t,x,w,q)$ is 
expressed in the form
\[
    H(t,x,w,q)= \lambda(t,x)w + a_1(t,x,w,q)w + b_1(t,x,w,q)q
\]
for some functions $a_1(t,x,w,q) \in {\mathscr X}_0(\Omega^*)$ 
and $b_1(t,x,w,q) \in {\mathscr X}_0(\Omega^*)$ satisfying 
\begin{align*}
    &\sup_{x \in D_{R_0^*}}|a_1(t,x,0,0)|=O(\mu(t)) \quad
       \mbox{(as $t \longrightarrow +0$)}, \\
    &\sup_{x \in D_{R_0^*}}|b_1(t,x,0,0)|=O(\mu(t)) \quad
       \mbox{(as $t \longrightarrow +0$)}.
\end{align*}
To get Theorem \ref{Theorem2.2} it is sufficient to show the following 
result.

\begin{prop}\label{Proposition2.5} 
    Suppose ${\rm Re}\lambda(0,0)<0$.
Let $w(t,x) \in {\mathscr X}_1((0,\sigma_0) \times D_{R_0})$ be a 
solution of {\rm (\ref{2.6})} with $\sigma_0>0$ and $R_0>0$. If 
$w(t,x)$ satisfies
\begin{equation}\label{2.7}
     \varlimsup_{R \to +0} \, \biggl[ \, 
          \lim_{\sigma \to +0} \, \Bigl( \frac{1}{R^2}
       \sup_{(0,\sigma) \times D_R}|w(t,x)| \Bigr) \, \biggr]
       \, = \, 0,
\end{equation}
we have $w(t,x)=0$ on $(0, \sigma) \times D_{\delta}$ for some 
$\sigma>0$ and $\delta>0$.
\end{prop}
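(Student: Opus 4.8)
The plan is to imitate Cauchy's characteristic method applied to equation (\ref{2.6}). Since $H$ vanishes when $(w,q)=(0,0)$, the function $w\equiv 0$ is a solution, and we want to show it is the only one near $t=0$ subject to (\ref{2.7}). Write $H(t,x,w,q)=\lambda(t,x)w+a_1(t,x,w,q)w+b_1(t,x,w,q)q$, and along a solution $w(t,x)$ set $\tilde a(t,x)=a_1(t,x,w(t,x),\partial_x w(t,x))$ and $\tilde b(t,x)=b_1(t,x,w(t,x),\partial_x w(t,x))$, so that $w$ satisfies the linear (in $w$) equation
\[
    t\,\frac{\partial w}{\partial t}
       = \bigl(\lambda(t,x)+\tilde a(t,x)\bigr)w + \tilde b(t,x)\,\frac{\partial w}{\partial x}.
\]
First I would fix a small closed disc $\overline{D_\delta}$ and a small $\sigma$; on $(0,\sigma)\times D_\delta$ the coefficient $\tilde b(t,x)$ is $O(\mu(t))$ uniformly (because $b_1(t,x,0,0)=O(\mu(t))$ and $(w,\partial_x w)\to 0$ as $t\to+0$ — here one uses (\ref{2.7}) together with Cauchy's estimates to control $\partial_x w$ on a slightly smaller disc), and likewise $\mathrm{Re}(\lambda(t,x)+\tilde a(t,x))\le -c<0$ for some positive $c$ on that set.

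Next I would introduce the characteristic curves: for each starting point $(t_1,x_1)$ with $0<t_1<\sigma$ and $x_1\in D_{\delta/2}$, solve
\[
    t\,\frac{dx}{dt} = -\,\tilde b(t,x), \qquad x|_{t=t_1}=x_1.
\]
Because $\tilde b=O(\mu(t))$ and $\int_0^{T_0}\mu(s)/s\,ds<\infty$, the characteristic $x(t;t_1,x_1)$ exists for all $t\in(0,t_1]$, extends continuously to $t=0$, and stays within $D_\delta$ (the total horizontal displacement is bounded by a constant times $\int_0^{t_1}\mu(s)/s\,ds$, which is small). Along such a characteristic, $\frac{d}{dt}w(t,x(t))=\partial_t w+\frac{dx}{dt}\partial_x w=\bigl(\lambda+\tilde a\bigr)w/t$, so setting $W(t)=w(t,x(t;t_1,x_1))$ we get the scalar ODE $tW'=\bigl(\lambda(t,x(t))+\tilde a(t,x(t))\bigr)W$. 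Integrating from $s$ to $t_1$ and using $\mathrm{Re}(\lambda+\tilde a)\le -c$ gives
\[
    |W(t_1)| \le \Bigl(\frac{s}{t_1}\Bigr)^{c}\,|W(s)|
\]
for $0<s\le t_1$. Now let $s\to+0$: by (\ref{2.7}) the quantity $\sup_{(0,s)\times D_\delta}|w|$ tends to a limit that is as small as we like after first shrinking $\delta$, and in particular $|W(s)|$ is bounded (indeed $\to 0$), while $(s/t_1)^c\to 0$. Hence $W(t_1)=0$, i.e. $w(t_1,x_1)=0$. Since $(t_1,x_1)$ was an arbitrary point of $(0,\sigma)\times D_{\delta/2}$, this proves $w\equiv 0$ there.

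The main obstacle I anticipate is the bookkeeping needed to make the characteristic construction self-consistent: $\tilde b$ depends on $w$ and $\partial_x w$, which are only known to be small after we restrict to the region where the conclusion will hold, so one must set up the argument on a fixed small box $(0,\sigma)\times D_\delta$ from the outset, use (\ref{2.7}) to guarantee $\sup|w|$ and (via Cauchy's inequality on $D_\delta$ versus $D_{\delta/2}$) $\sup|\partial_x w|$ are small there, and check that the characteristics launched from $D_{\delta/2}$ genuinely remain in $D_\delta$ down to $t=0$ — this is exactly where the integrability $\int_0 \mu(s)/s\,ds<\infty$ is used. A secondary point requiring care is that $w$ need not extend continuously to $t=0$, so the limit $s\to+0$ must be taken through the $\varlimsup$/$\lim$ in (\ref{2.7}) rather than by evaluating $w$ at $t=0$; one takes a sequence $s_n\to+0$ along which $\sup_{D_\delta}|w(s_n,\cdot)|$ stays bounded, which (\ref{2.7}) supplies once $\delta$ is fixed small enough that the inner $\lim$ is finite.
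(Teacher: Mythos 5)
Your overall strategy is the paper's: decompose $H=\lambda w+a_1w+b_1q$, freeze the coefficients along the solution to get a linear transport equation, integrate along the characteristics of $t\,dx/dt=-\tilde b$, use ${\rm Re}(\lambda+\tilde a)<-c$ to get the decay factor $(s/t_1)^c$, and let $s\to+0$. But there is a genuine gap at the step where you claim the characteristics reach $t=0$ while staying in $D_\delta$. You assert that $\tilde b(t,x)=b_1(t,x,w,\partial_xw)$ is $O(\mu(t))$ uniformly ``because $(w,\partial_xw)\to0$ as $t\to+0$.'' Neither half of this is available: condition (\ref{2.7}) does \emph{not} imply $w\to0$ as $t\to+0$ for fixed $R$ (the inner limit $\lim_{\sigma\to+0}\sup_{(0,\sigma)\times D_R}|w|$ may be a positive constant; it is only required to be $o(R^2)$ as $R\to+0$), and even if $(w,\partial_xw)\to0$ held, it would only give $\tilde b=o(1)$, for which $\int_0 |\tilde b(s,x(s))|\,ds/s$ can diverge. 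Since $|\tilde b|\le B_0\mu(t)+B_1|w|+B_2|q|$, the naive bound on the horizontal displacement $\int_{t_1}^{t_0}|\tilde b(\tau,x(\tau))|\,d\tau/\tau$ picks up a term like $(B_1r_1+B_2r_2)\log(t_0/t_1)$, which blows up as $t_1\to+0$; so without further input the characteristic is not known to stay in $D_\delta$ down to $t=0$, and the whole argument collapses.

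The paper closes exactly this gap by a bootstrap: along the characteristic one first proves the decay estimates $|w^*(\tau)|\le(t_1/\tau)^a|w^*(t_1)|$ and $|q^*(\tau)|\le(t_1/\tau)^a(\Gamma|w^*(t_1)|\log(\tau/t_1)+|q^*(t_1)|)$ (Lemma \ref{Lemma2.7}), feeds these back into $|\tilde b|\le B_0\mu+B_1|w^*|+B_2|q^*|$ so that the displacement integral converges ($\int_{t_1}^{t_0}(t_1/\tau)^a\,d\tau/\tau\le1/a$, etc.), and obtains the uniform bound of Lemma \ref{Lemma2.8}; the requirement that this bound be $<R/2$ is precisely where the $1/R^2$ normalization in (\ref{2.7}) enters, since it forces $r_1/R$ and $r_2/R$ to be small (Lemma \ref{Lemma2.6}), and a maximal-interval/compactness argument (Corollary \ref{Corollary2.9}) then gives $t_\xi=0$. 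You also need the derivative $q=\partial_xw$ to appear in the characteristic system in the first place, which is why the paper differentiates (\ref{2.8}) to get the companion equation (\ref{2.9}) for $q$; your sketch never estimates $q$ along the characteristic. Your final limiting step is fine as stated (boundedness of $|W(s)|$ by $r_1$ suffices; you do not need $W(s)\to0$), but the proof is incomplete without the displacement/decay bootstrap above.
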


\begin{proof}
    Let us prove this step by step.

\par
\medskip
   {\bf Step 1.} Since $\sigma^*>0$ and $R_0^*>0$ are sufficiently 
small, we may suppose that there is an $a>0$ satisfying
\[
     {\rm Re}\lambda(t,x) < -2a \quad
           \mbox{on $[0,\sigma^*] \times D_{R_0^*}$}.
\]
Since $a_1(t,x,0,0)=O(\mu(t))$ and $b_1(t,x,0,0)=O(\mu(t))$ hold, 
we have the estimates
\begin{align*}
   &|a_1(t,x,w,q)| \leq A_0 \mu(t) + A_1|w|+A_2|q| \quad
            \mbox{on $\Omega^*$}, \\
   &|b_1(t,x,w,q)| \leq B_0 \mu(t) + B_1|w|+B_2|q| \quad
            \mbox{on $\Omega^*$}
\end{align*}
for some $A_i>0$ ($i=0,1,2$) and $B_i>0$ ($i=0,1,2$).

\par
\medskip
   {\bf Step 2.} Let $w(t,x) \in {\mathscr X}_1((0,\sigma_0) 
                                            \times D_{R_0})$ be a 
solution of (\ref{2.6}) for some $0<\sigma_0<\sigma^*$ 
and $0<R_0<R_0^*$. We suppose that $w(t,x)$ satisfies (\ref{2.7}).
We set $q(t,x)= (\partial w/\partial x)(t,x)$ and
\begin{align*}
    &a(t,x)=a_1(t,x,w(t,x),q(t,x)), \\
    &b(t,x)=b_1(t,x,w(t,x),q(t,x)):
\end{align*}
these are functions belonging to 
${\mathscr X}_0((0,\sigma_0) \times D_{R_0})$.  Then, by (\ref{2.6}) 
we see that $w(t,x)$ satisfies the following linear partial 
differential equation:
\begin{equation}\label{2.8}
     t \frac{\partial w}{\partial t}
            - b(t,x) \frac{\partial w}{\partial x}
     = (\lambda(t,x)+a(t,x))w. 
\end{equation}
By applying $\partial/\partial x$ to (\ref{2.8}) we have
\begin{equation}\label{2.9}
    t \frac{\partial q}{\partial t}
          - b(t,x) \frac{\partial q}{\partial x} 
    = \gamma(t,x)w + (\lambda(t,x)+a(t,x)+\ell(t,x))q,
\end{equation}
where 
\begin{align*}
    &\gamma(t,x)= (\partial \lambda/\partial x)(t,x)
         + (\partial a/\partial x)(t,x), \\
    &\ell(t,x)= (\partial b/\partial x)(t,x):
\end{align*}
these are also functions belonging to
${\mathscr X}_0((0,\sigma_0) \times D_{R_0})$.
For $0<\sigma<\sigma_0$ and $0<R<R_0$ we set
\[
   A = \!\!\sup_{(0,\sigma) \times D_{R}} \!\!|a(t,x)|, \quad
   \Gamma = \!\! \sup_{(0,\sigma) \times D_{R}} \!\!|\gamma(t,x)|, 
   \quad
   L =  \!\!\sup_{(0,\sigma) \times D_{R}} \!\!|\ell(t,x)|.
\]
We set also
\[
    r_1= \!\!\sup_{(0,\sigma) \times D_R}\!\!|w(t,x)|, \quad
    r_2= \!\!\sup_{(0,\sigma) \times D_R}\!\!|q(t,x)|.
\]

\begin{lem}\label{Lemma2.6}
    By taking $\sigma>0$ and $R>0$ sufficiently small we have the 
conditions $A+L < a$, and
\[
      B_0 \varphi(\sigma) + \Bigl( \frac{B_1}{a} 
        + \frac{B_2\Gamma}{a^2} \Bigr)\,r_1
         + \frac{B_2}{a} \,r_2 < \frac{R}{2}.
\]
\end{lem}

\begin{proof}
   By (\ref{2.7}) we have
\begin{equation}\label{2.10}
    \lim_{\sigma \to +0} 
         \sup_{(0,\sigma) \times D_R}|w(t,x)|
        = o(R^2) \quad \mbox{(as $R \longrightarrow +0$)}.  
\end{equation}
By applying Cauchy's integral formula in $x$ to (\ref{2.10}) we have
\begin{equation}\label{2.11}
     \lim_{\sigma \to +0} 
         \sup_{(0,\sigma) \times D_R}|q(t,x)|
        = o(R) \quad \mbox{(as $R \longrightarrow +0$)}.
\end{equation}
Since $|a_1(t,x,w,q)| \leq A_0\mu(t) + A_1|w|+A_2|q|$ and
$|b_1(t,x,w,q)| \leq B_0 \mu(t) + B_1|w|+B_2|q|$ are known, by 
(\ref{2.10}) and (\ref{2.11}) we have
\begin{align*}
   &\lim_{\sigma \to +0} 
         \sup_{(0,\sigma) \times D_R}|a(t,x)|= o(R) \quad
                  \mbox{(as $R \longrightarrow +0$)}, \\
   &\lim_{\sigma \to +0} 
         \sup_{(0,\sigma) \times D_R}|b(t,x)|= o(R) \quad
                   \mbox{(as $R \longrightarrow +0$)}, \\
   &\lim_{\sigma \to +0} 
         \sup_{(0,\sigma) \times D_R}
         |(\partial b/\partial x)(t,x)|= o(1) \quad
                   \mbox{(as $R \longrightarrow +0$)}.
\end{align*}
Therefore, by taking $\sigma>0$ and $R>0$ sufficiently small, 
the numbers $A$, $L$, $r_1/R$ and $r_2/R$ will be as small as 
possible. This proves Lemma \ref{Lemma2.6}. 
\end{proof}

\par
\medskip
   {\bf Step 3.} Let $\sigma>0$ and $R>0$ be as in Lemma \ref{Lemma2.6}. 
Take any $t_0 \in (0,\sigma)$ and $\xi \in D_R$; for a while we fix 
them.
\par
    Let us consider the initial value problem
\begin{equation}\label{2.12}
   t \,\frac{dx}{dt} = -b(t,x), \quad x(t_0)=\xi. 
\end{equation}
Here, we regard $b(t,x)$ as a function in
${\mathscr X}_0((0,\sigma) \times D_R)$.  Let $x(t)$ be the unique 
solution in a neighborhood of $t=t_0$. Let $(t_{\xi},t_0]$ be the 
maximal interval of the existence of this solution. Set
\[
      w^*(t)=w(t,x(t)), \quad q^*(t)=q(t,x(t)).
\]
Then, by (\ref{2.8}) and (\ref{2.9}) we have
\begin{equation}\label{2.13}
    t \, \dfrac{dw^*(t)}{dt}= (\lambda(t,x(t))+ a(t,x(t)))w^*(t), 
               \quad w^*(t_0)=w(t_0,\xi) 
\end{equation}
on $(t_{\xi},t_0]$, and
\begin{align}
    &t \, \dfrac{dq^*(t)}{dt}
    = \gamma(t,x(t))w^*(t)+ (\lambda(t,x(t))
              + a(t,x(t))+ \ell(t,x(t)))q^*(t),
      \label{2.14} \\
     &q^*(t_0)=q(t_0,\xi) \notag
\end{align}
on $(t_{\xi},t_0]$.

\begin{lem}\label{Lemma2.7}
    Under the above situation, we have the following estimates for 
any $(t_1,\tau)$ satisfying $t_\xi<t_1<\tau \leq t_0$:
\begin{align}
    &|w^*(\tau)| \leq \Bigl( \frac{t_1}{\tau} \Bigr)^{a}|w^*(t_1)|, 
               \label{2.15}\\
    &|q^*(\tau)| \leq \Bigl( \frac{t_1}{\tau} \Bigr)^a
       \bigl( \Gamma|w^*(t_1)| \log(\tau/t_1) + |q^*(t_1)| \bigr).
            \label{2.16}
\end{align}
\end{lem}

\begin{proof}
    Let $t_\xi<t_1<\tau \leq t_0$: set
\[
     \phi(t)=\exp \Bigl[ \int_t^{\tau}
          \frac{(\lambda(s,x(s))+a(s,x(s)))}{s}ds 
          \Bigr], \quad t_1 \leq t \leq \tau.
\]
Since ${\rm Re}(\lambda(s,x(s))+a(s,x(s)))<-2a+A <-a$ we have
\begin{align*}
     |\phi(t)| &\leq \exp \Bigl[ \int_t^{\tau}
        \frac{{\rm Re}(\lambda(s,x(s))+a(s,x(s)))}{s}ds \Bigr] \\
     &\leq \exp \Bigl[ \int_t^{\tau}
                     \frac{-a}{s}ds \Bigr] 
     = \Bigl( \frac{t}{\tau} \Bigr)^{a}, 
                \quad t_1 \leq t \leq \tau.
\end{align*}
Let us show (\ref{2.15}). By (\ref{2.13}) we have
\[
    \frac{d}{dt} (w^*(t) \phi(t)) =0
\]
and so by integrating this from $t_1$ to $\tau$ we have
\[
     w^*(\tau)\phi(\tau)= w^*(t_1) \phi(t_1).
\]
Since $\phi(\tau)=1$ and $|\phi(t_1)| \leq (t_1/\tau)^{a}$ holds,
by applying this to the above equality we have (\ref{2.15}).
\par
   Let us show (\ref{2.16}). In this case, we set
\[
     \phi_1(t)=\exp \Bigl[ \int_t^{\tau}
          \frac{(\lambda(s,x(s))+a(s,x(s))+\ell(t,x(t)))}{s}ds 
          \Bigr], \quad t_1 \leq t \leq \tau.
\]
Since ${\rm Re}(\lambda(s,x(s))+a(s,x(s))+\ell(t,x(t)))
            <-2a+A+L<-a$ we have
$|\phi_1(t)| \leq (t/\tau)^a$ for $t_1 \leq t \leq \tau$.
Then, we can reduce (\ref{2.14}) into
\[
      \frac{d}{dt}(\phi_1(t)q^*(t))
         = \phi_1(t) \gamma(t,x(t)) w^*(t), 
\]
and so by integrating this from $t_1$ to $\tau$ and by using
(\ref{2.15}) (with $\tau$ replaced by $t$) we have
\begin{align*}
    |q^*(\tau)| &\leq |\phi(t_1)q^*(t_1)|
          + \int_{t_1}^{\tau} |\phi_1(t)\gamma(t,x(t)) w^*(t)|
            \frac{dt}{t} \\
    &\leq \Bigl( \frac{t_1}{\tau} \Bigr)^{a}|q^*(t_1)|+
            \int_{t_1}^{\tau} \Bigl( \frac{t}{\tau} \Bigr)^{a}
           \Gamma \Bigl( \frac{t_1}{t} \Bigr)^{a}|w^*(t_1)|
            \frac{dt}{t} \\
    &= \Bigl( \frac{t_1}{\tau} \Bigr)^{a}|q^*(t_1)|
        + \Bigl( \frac{t_1}{\tau} \Bigr)^{a}\Gamma|w^*(t_1)|
          \times \log (\tau/t_1).
\end{align*}
This proves (\ref{2.16}). 
\end{proof}

\par
\medskip
   {\bf Step 4.} Recall that 
$|b_1(t,x,w,q)| \leq B_0\mu(t) + B_1|w|+B_2|q|$ holds on $\Omega^*$. 
We have

\begin{lem}\label{Lemma2.8}
    Under the above situation, we have the 
following estimate for any $t_1 \in (t_\xi,t_0)$:
\begin{align*}
    |x(t_1)| \leq |\xi| &+ B_0(\varphi(t_0)- \varphi(t_1))\\
    &+\Bigl(\frac{B_1}{a} + \frac{B_2\Gamma}{a^2} \Bigr)|w^*(t_1)|
         + \frac{B_2}{a}|q^*(t_1)|.
\end{align*}
\end{lem}

\begin{proof}
    Let $t_1 \in (t_\xi,t_0)$. By (\ref{2.12}) we have
\[
      x(t_1)= \xi + \int_{t_1}^{t_0}b(\tau,x(\tau))
           \frac{d\tau}{\tau}.
\]
Since 
\begin{align*}
   &|b(\tau,x(\tau))| \\
   &\leq B_0 \mu(\tau) + B_1|w^*(\tau)|+B_2|q^*(\tau)| \\
   &\leq B_0 \mu(\tau) 
         + B_1 \Bigl( \frac{t_1}{\tau} \Bigr)^a|w^*(t_1)|
         + B_2 \Bigl( \frac{t_1}{\tau} \Bigr)^a
       \bigl( \Gamma|w^*(t_1)| \log(\tau/t_1) + |q^*(t_1)| \bigr)
\end{align*}
holds for any $\tau \in (t_1,t_0]$, we have
\begin{align}
   |x(t_1)| \leq |\xi| &+ \int_{t_1}^{t_0}
            \Bigl(B_0 \mu(\tau) 
      + B_1\Bigl( \frac{t_1}{\tau} \Bigr)^a|w^*(t_1)| \label{2.17}\\
     &+ B_2 \Bigl( \frac{t_1}{\tau} \Bigr)^a
       \bigl( \Gamma|w^*(t_1)| \log(\tau/t_1) + |q^*(t_1)| \bigr)
             \Bigr) \frac{d\tau}{\tau}. \notag
\end{align}
Here, we note:
\begin{align*}
     &\int_{t_1}^{t_0}\Bigl( \frac{t_1}{\tau} \Bigr)^a
            \frac{d\tau}{\tau}
         = \frac{1}{a} \Bigl(1- \frac{{t_1}^a}{{t_0}^a} \Bigr)
         \leq \frac{1}{a}, \\
    &\int_{t_1}^{t_0}\Bigl( \frac{t_1}{\tau} \Bigr)^a
            \log(\tau/t_1) \frac{d\tau}{\tau}
     = \frac{{t_1}^a}{-a{t_0}^a} \log(t_0/t_1) 
             + \frac{1}{a^2} \Bigl(1- \frac{{t_1}^a}{{t_0}^a} \Bigr)
              \leq \frac{1}{a^2}.
\end{align*}
By applying these estimates to (\ref{2.17}), we have 
Lemma \ref{Lemma2.8}.
\end{proof}

\begin{cor}\label{Corollary2.9}
    If $\xi \in D_{R/2}$ we have $t_{\xi}=0$.
\end{cor}

\begin{proof}
    Let $|\xi|<R/2$. Let us show that if $t_{\xi}>0$ 
holds we have a contradiction. 
\par
   Suppose that $t_{\xi}>0$ holds. Then, by Lemmas \ref{Lemma2.6} 
and \ref{Lemma2.8} we have
\[
    |x(t_1)| \leq \frac{R}{2}+B_0 \varphi(\sigma)
         +\Bigl(\frac{B_1}{2a} + \frac{B_2\Gamma}{a^2} \Bigr)r_1
         + \frac{B_2}{a}r_2 =R_1< R
\]
for any $t_1 \in (t_\xi,t_0)$. Since 
$K=\{x \in \BC^n \,;\, |x| \leq R_1 \}$ is a compact subset of 
$D_R$ and since $x(t_1) \in K$ for any $t_1 \in (t_\xi,t_0]$, 
by a theorem in ordinary differential equations (for example, by 
Theorem 4.1 in Coddington-Levinson \cite{CL}) we can extend $x(t)$ 
to $(t_{\xi}-\varepsilon, t_0]$ for some $\varepsilon>0$. This 
contradicts the condition that $(t_{\xi}, t_0]$ is the maximal 
interval of the existence of the solution $x(t)$.
\end{proof}

\par
\medskip
   {\bf Step 5.} Since $t_{\xi}=0$, by (\ref{2.15}) with $\tau=t_0$ 
we have
\[
    |w^*(t_0)| \leq \Bigl( \frac{t_1}{t_0} \Bigr)^a|w^*(t_1)|
                  \leq \Bigl( \frac{t_1}{t_0} \Bigr)^a r_1
\]
for any $t_1 \in (0,t_0)$. Since $r_1>0$ is independent of $t_1$,
by letting $t_1 \longrightarrow +0$ we have $w^*(t_0)=0$.
Since $w^*(t_0)=w(t_0,\xi)$ we have $w(t_0,x)=0$ for any
$x \in D_{R/2}$. Since $t_0 \in (0,\sigma)$ is taken arbitrarily
we have $w(t,x)=0$ on $(0,\sigma) \times D_{R/2}$.
\par
   This completes the proof of Proposition \ref{Proposition2.5}. 
\end{proof}

%
%

\subsection{Application}\label{subsection2.3}

    Let us apply Theorem \ref{Theorem2.2} to the problem 
of analytic continuation of solutions of Briot-Bouquet type
partial differential equations.
\par
   Let $(t,x)$ be the variables in $\BC_t \times \BC_x$, and let 
$F(t,x,u,v)$ be a function in a neighborhood $\Delta$ of 
the origin of $\BC_t \times \BC_x \times \BC_u \times \BC_v$. 
Set $\Delta_0=\Delta \cap \{t=0, u=0, v=0 \}$.
In this subsection, we consider the following equation
\begin{equation}\label{2.18}
   t \,\frac{\partial u}{\partial t}
   = F \Bigl( t,x,u, \frac{\partial u}{\partial x} \Bigr)
\end{equation}
(in the germ sense at $(0,0) \in \BC_t \times \BC_x$) 
under the assumptions
\par
\medskip
   ${\rm B}_1)$ \enskip $F(t,x,u,v)$ is holomorphic in $\Delta$,
\par
   ${\rm B}_2)$ \enskip $F(0,x,0,0) \equiv 0$ in
       $\Delta_0$, \enskip and
\par
   ${\rm B}_3)$ \enskip $(\partial F/\partial v)
           (0,x,0,0) \equiv 0$ in $\Delta_0$.
\par
\medskip
\noindent
Then, equation (\ref{2.18}) is called a Briot-Bouquet type partial 
differential equation with respect to $t$ (by G\'erard-Tahara
\cite{gt1,book}), and the function
\[
    \lambda (x) = \frac{\partial F}{\partial u}
           (0,x,0,0)  
\]
is called the characteristic exponent of (\ref{2.18}).
This equation was studied by \cite{gt1} and Yamazawa \cite{yamazawa}. 
\par
   By \cite{gt1} we know that if 
$\lambda(0) \not\in \{1,2,\ldots \}$ equation (\ref{2.18}) has a unique
holomorphic solution $u_0(t,x)$ in a neighborhood of 
$(0,0) \in \BC_t \times \BC_x$ satisfying $u_0(0,x)=0$ near $x=0$.
Therefore, by applying Theorem \ref{Theorem2.2} (with $\mu(t)=t$) to 
this case we have

\begin{thm}\label{Theorem2.10}
    Suppose the conditions ${\rm B}_1)$, ${\rm B}_2)$, ${\rm B}_3)$
and ${\rm Re}\lambda(0)<0$. Let $u(t,x)$ be a holomorphic solution
of {\rm (\ref{2.18})} in a neighborhood of $(0,\sigma_0) \times D_{R_0}$ 
for some $\sigma_0>0$ and $R_0>0$. If $u(t,x)$ satisfies
\begin{equation}\label{2.19}
     \varlimsup_{R \to +0} \, \biggl[ \, 
          \lim_{\sigma \to +0} \, \Bigl( \frac{1}{R^2}
       \sup_{(0,\sigma) \times D_R}|u(t,x)| \Bigr) \, \biggr]
       \, = \, 0,  
\end{equation}
$u(t,x)$ can be continued holomorphically up to a neighborhood of 
$(0,0) \in \BC_t \times \BC_x$.
\end{thm}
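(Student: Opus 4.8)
The plan is to deduce Theorem \ref{Theorem2.10} from Theorem \ref{Theorem2.2} by a routine localization-and-uniqueness argument. First I would fix a small polydisc-type neighborhood $\Delta$ on which $F$ is holomorphic, and verify that assumptions ${\rm B}_1),{\rm B}_2),{\rm B}_3)$ imply the hypotheses of Theorem \ref{Theorem2.2} with the weight $\mu(t)=t$: since $F(0,x,0,0)\equiv 0$ and $(\partial F/\partial v)(0,x,0,0)\equiv 0$ on $\Delta_0$, holomorphy in $t$ gives $\sup_{x\in D_{R_0}}|F(t,x,0,0)|=O(t)$ and $\sup_{x\in D_{R_0}}|(\partial F/\partial v)(t,x,0,0)|=O(t)$ as $t\to+0$, so that \eqref{2.1} and \eqref{2.2} hold. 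Moreover $\lambda(0,0)=\lambda(0)$ and the condition ${\rm Re}\,\lambda(0)<0$ is exactly ${\rm Re}\,\lambda(0,0)<0$. Also note $\int_0^{T_0}\mu(s)/s\,ds=\int_0^{T_0}ds<\infty$, so $\mu(t)=t$ is a legitimate weight function.

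Next I would invoke the existence result of G\'erard-Tahara \cite{gt1} (as quoted in the excerpt): since $\lambda(0)\notin\{1,2,\ldots\}$ — which follows from ${\rm Re}\,\lambda(0)<0$ — equation \eqref{2.18} has a unique holomorphic solution $u_0(t,x)$ near $(0,0)$ with $u_0(0,x)=0$. In particular $u_0$ is holomorphic, hence belongs to ${\mathscr X}_1(W_{T,R,r})$ for suitable $T,R,r$, and being holomorphic with $u_0(0,x)=0$ it satisfies $|u_0(t,x)|\le M\,t$ and $|\partial u_0/\partial x|\le M\,t$; so $u_0$ is the solution produced in Theorem \ref{Theorem2.1} (uniqueness in that function space identifies the two). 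The given solution $u(t,x)$ is holomorphic on a neighborhood of $(0,\sigma_0)\times D_{R_0}$; restricting, it lies in ${\mathscr X}_1((0,T)\times D_R)$ for some $T,R>0$ and satisfies \eqref{2.19}, which is precisely \eqref{2.3}. Theorem \ref{Theorem2.2} then yields $u(t,x)=u_0(t,x)$ on $(0,T_1)\times D_{R_1}$ for some $T_1,R_1>0$.

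Finally, the continuation statement follows: on the overlap $(0,T_1)\times D_{R_1}$ the holomorphic function $u$ agrees with the holomorphic function $u_0$, and $u_0$ is defined and holomorphic on a full neighborhood of $(0,0)\in\BC_t\times\BC_x$. Hence the function equal to $u$ where $u$ is defined and to $u_0$ near $(0,0)$ is well defined and holomorphic (by the identity theorem on the connected overlap), giving the desired holomorphic continuation of $u$ up to a neighborhood of the origin.

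The argument is essentially bookkeeping; the only point requiring a little care — the step I would treat as the "main obstacle" — is checking that the solution $u_0$ of \cite{gt1}, which is characterized there by holomorphy and $u_0(0,x)=0$, really coincides with the solution $u_0$ of Theorem \ref{Theorem2.1}, which is characterized by the growth bounds $|u_0|\le M\mu(t)$, $|\partial u_0/\partial x|\le M\mu(t)$ on $W_{T,R,r}$. This is resolved by observing that a holomorphic function vanishing at $t=0$ automatically satisfies the $O(t)=O(\mu(t))$ bounds on a small $W_{T,R,r}$, so it lies in the function space of Theorem \ref{Theorem2.1}, and then the uniqueness clause of Theorem \ref{Theorem2.1} forces the identification; it is this identification that lets us transfer the abstract uniqueness of Theorem \ref{Theorem2.2} into the concrete analytic-continuation conclusion.
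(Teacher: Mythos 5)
Your proposal is correct and follows essentially the same route as the paper: apply Theorem \ref{Theorem2.2} with $\mu(t)=t$ to identify $u$ with the holomorphic solution $u_0$ of G\'erard--Tahara on a small real--$t$ domain, then continue via $u_0$. The extra care you take in checking that the holomorphic $u_0$ with $u_0(0,x)=0$ satisfies the $O(\mu(t))$ bounds and hence coincides with the solution of Theorem \ref{Theorem2.1} is a detail the paper leaves implicit, and it is handled correctly.
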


\begin{rem}\label{Remark2.11}
    The following example shows that we need some condition like 
(\ref{2.19}) in order to get the analytic continuation of solutions: 
the equation
\[
     t \frac{\partial u}{\partial t}
     = - 2u +xt \Bigl(\frac{\partial u}{\partial x} \Bigr)^2
\]
has a solution $u=x/t$.
\end{rem}

%
%
\section{Analysis in Case 2}\label{section3}

   Let us consider Case 2 in a little bit general setting.
We consider the equation 
\begin{equation}\label{3.1}
     t \,\frac{\partial u}{\partial t}
     = \alpha(t,x)+ \lambda(t,x)u + 
       (\beta(t,x)+xc(t,x))\frac{\partial u}{\partial x}
       +R_2 \Bigl(t,x,u,\frac{\partial u}{\partial x} \Bigr)
\end{equation}
where $\alpha(t,x)$, $\lambda(t,x)$, $\beta(t,x)$ and $c(t,x)$ are 
continuous functions on $[0,T_0] \times D_{R_0}$ that are holomorphic 
in $x$ for any fix $t$ and satisfy
\begin{align}
   &\sup_{x \in D_{R_0}}|\alpha(t,x)|= O(\mu(t)) 
               \quad \mbox{(as $t \longrightarrow +0$)}, 
           \label{3.2} \\
   &\sup_{x \in D_{R_0}}|\beta(t,x)|=O(\mu(t)) 
                 \quad \mbox{(as $t \longrightarrow +0$)}, 
          \label{3.3} \\
   &{\rm Re} \,c(t,x) \leq 0 \quad 
             \mbox{on $[0,T_0] \times D_{R_0}$}, \label{3.4}
\end{align}
and $R_2(t,x,u,v)$ is a continuous function on $\Omega$ (where 
$\Omega$ is the same as in \S 1) which is holomorphic in the variable 
$(x,u,v)$ for any fixed $t$ and has a Taylor expansion in $(u,v)$ of 
the form:
\[
        R_2(t,x,u,v)= \sum_{i+j \geq 2}
           a_{i,j}(t,x) u^iv^j.
\]
As to the existence of a solution, we know a unique solvability 
result. By [Theorem 5.1 in Bacani-Tahara \cite{BT1}] we have

\begin{thm}\label{Theorem3.1} 
     Suppose the conditions {\rm (\ref{3.2})}, {\rm (\ref{3.3})} 
and {\rm (\ref{3.4})}. If ${\rm Re}\lambda (0,0)<0$ holds, there 
are $T>0$, $R>0$ and $r>0$ such that equation {\rm (\ref{3.1})} has 
a unique solution $u_0(t,x) \in {\mathscr X}_1(W_{T,R,r})$ satisfying
\[
      |u_0(t,x)| \leq M\mu(t) \quad \mbox{and} \quad 
      \Bigl| \frac{\partial u_0}{\partial x}(t,x)
         \Bigr| \leq M\mu(t)
\]
on $W_{T,R,r}$ for some $M>0$.
\end{thm}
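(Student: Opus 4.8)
The plan is to reproduce the argument of Theorem~5.1 in \cite{BT1}, which is the characteristic-method proof of \S\ref{subsection2.2} adapted to the extra drift term $xc(t,x)\,\partial_x u$. First I would absorb the quadratic part of the equation: since $R_2(t,x,u,v)=\sum_{i+j\ge2}a_{i,j}(t,x)u^iv^j$ vanishes to second order at $(u,v)=(0,0)$, write
\[
   R_2(t,x,u,v)=\tilde a(t,x,u,v)\,u+\tilde b(t,x,u,v)\,v,\qquad
   \tilde a=\int_0^1\frac{\partial R_2}{\partial u}(t,x,su,sv)\,ds,\ \
   \tilde b=\int_0^1\frac{\partial R_2}{\partial v}(t,x,su,sv)\,ds,
\]
so that $\tilde a,\tilde b\in{\mathscr X}_0(\Omega)$ and vanish at $(u,v)=(0,0)$; then (\ref{3.1}) becomes
\[
   t\,\frac{\partial u}{\partial t}
     -\bigl(\beta(t,x)+xc(t,x)+\tilde b(t,x,u,\partial_x u)\bigr)\frac{\partial u}{\partial x}
     =\alpha(t,x)+\bigl(\lambda(t,x)+\tilde a(t,x,u,\partial_x u)\bigr)u,
\]
which has the form of (\ref{2.8}) except for the coefficient $xc(t,x)$ in the drift --- harmless, and in fact favourable, because ${\rm Re}\,c\le0$. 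I would then fix parameters and work in the complete metric space
\[
   {\mathscr Y}=\bigl\{\,u\in{\mathscr X}_1(W_{T,R,r})\ :\
     |u|\le M\mu(t)\ \text{and}\ |\partial_x u|\le M\mu(t)\ \text{on }W_{T,R,r}\,\bigr\},\qquad
   \mathrm{dist}(u,\tilde u)=\!\!\sup_{W_{T,R,r}}\!\frac{|u-\tilde u|}{\mu(t)}.
\]

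Next I would run a Picard iteration. Starting from $u_1\equiv0$, and given $u_n\in{\mathscr Y}$, set $A_n=\tilde a(t,x,u_n,\partial_x u_n)$ and $B_n=\tilde b(t,x,u_n,\partial_x u_n)$; since $\tilde a,\tilde b$ are Lipschitz in $(u,v)$ and vanish at the origin, $|A_n|,|B_n|\le C(|u_n|+|\partial_x u_n|)\le 2CM\mu(t)$, uniformly in $n$. Let $u_{n+1}$ be the solution of the \emph{linear} equation
\[
   t\,\frac{\partial u_{n+1}}{\partial t}
     -\bigl(\beta+xc+B_n\bigr)\frac{\partial u_{n+1}}{\partial x}
     =\alpha+(\lambda+A_n)\,u_{n+1}
\]
obtained by Cauchy's characteristic method exactly as in Steps~3--4 of \S\ref{subsection2.2}: along $t\,dx/dt=-(\beta+xc+B_n)$, $x(t_0)=\xi$, the profile $u_{n+1}^{*}(t)=u_{n+1}(t,x(t))$ solves a scalar linear ODE, and one prescribes $u_{n+1}^{*}(t)\to0$ as $t\to+0$. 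Because ${\rm Re}\,c\le0$, the differential inequality $t\,\frac{d}{dt}|x|\ge-(|\beta|+|B_n|)$ gives $|x(t)|\le|\xi|+\int_t^{t_0}(|\beta|+|B_n|)\,ds/s\le|\xi|+C\varphi(t_0)$, so --- choosing $r$ first, then $T,R$ small --- every characteristic issuing from a point of a slightly smaller domain stays inside $W_{T,R,r}$ and extends down to $t=0$, as in Corollary~\ref{Corollary2.9}; this makes $u_{n+1}$ well defined on $W_{T,R,r}$.

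To close the iteration I would use the elementary inequality $t_0^{-a}\int_0^{t_0}s^{a-1}\mu(s)\,ds\le\mu(t_0)/a$ (valid since $\mu$ is increasing): fixing $a>0$ with ${\rm Re}(\lambda+A_n)<-a$ near the origin --- possible by ${\rm Re}\,\lambda(0,0)<0$ --- the integrating factor $(t/t_0)^{a}$ and this inequality turn the ODE for $u_{n+1}^{*}$ into the bound $|u_{n+1}(t,x)|\le M\mu(t)$, provided $M$ is large relative to the constant in $\alpha=O(\mu)$ and $T$ is small. The companion bound $|\partial_x u_{n+1}|\le M\mu(t)$ is obtained by differentiating the equation in $x$ and repeating the ODE estimate as for (\ref{2.16}) (the linear part of the resulting equation still has real part $<-a$ near the origin, since ${\rm Re}\,c\le0$); here the one genuinely delicate point is that the inhomogeneous term contains $\partial_x A_n$, hence $\partial_x^{2}u_n$, which is controlled, using the holomorphy of $u_n$ in $x$, by Cauchy's inequality on a nested polydisc --- the same device used to pass from (\ref{2.10}) to (\ref{2.11}) in the proof of Lemma~\ref{Lemma2.6}. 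Thus ${\mathscr Y}$ is invariant. Finally, $u_{n+1}-u_n$ satisfies a linear equation of the same shape whose zeroth-order and drift coefficients are $O(\mu)$ --- they arise from the Lipschitz increments $A_n-A_{n-1}$, $B_n-B_{n-1}$, controlled by $\mathrm{dist}(u_n,u_{n-1})$ together with a Cauchy estimate for $\partial_x u_n-\partial_x u_{n-1}$ --- so the same characteristic estimate shows $u_n\mapsto u_{n+1}$ is a contraction on ${\mathscr Y}$ after a further shrinking of $T$. Its unique fixed point is the asserted solution $u_0$.

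The hard part is the \emph{coordinated} choice of $T,R,r,M$: one needs simultaneously (i) the characteristics of every iterate to remain in $W_{T,R,r}$ and reach $t=0$, (ii) the weighted $\mu(t)$-bounds for $u_{n+1}$ \emph{and} for $\partial_x u_{n+1}$ to close with the same constant $M$, and (iii) the map to be contractive --- three requirements coupled through the geometry of $W_{T,R,r}$, the growth of $\varphi$, and the decay of $\mu$ at $0$. The subsidiary difficulty, namely that carrying $\partial_x u$ along the iteration formally costs a derivative (through $\partial_x A_n$, $\partial_x B_n$), is resolved throughout by exploiting holomorphy in $x$ via Cauchy's inequalities on nested domains, exactly as in \S\ref{section2}.
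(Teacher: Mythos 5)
The paper does not prove Theorem~\ref{Theorem3.1} at all: it is quoted verbatim from [Theorem 5.1 in Bacani--Tahara \cite{BT1}], so the only ``proof'' in this paper is the citation. Your proposal is therefore an attempt to reconstruct the argument of \cite{BT1}, and its overall shape (absorb $R_2$ into coefficients $\tilde a u+\tilde b v$, iterate linear transport equations, use ${\rm Re}\,\lambda(0,0)<0$ for the integrating factor $(t/t_0)^a$ and ${\rm Re}\,c\le 0$ to keep characteristics from escaping) is in the right spirit.

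There is, however, a genuine gap at exactly the point you flag as ``delicate'' and then declare resolved. To bound $\partial_x u_{n+1}$ you differentiate the linear equation, which produces $\partial_x A_n$ and $\partial_x B_n$, hence $\partial_x^2 u_n$; you propose to control this by Cauchy's inequality on a nested polydisc, ``exactly as in \S\ref{section2}.'' But in \S\ref{subsection2.2} the Cauchy estimate (passing from (\ref{2.10}) to (\ref{2.11})) is applied \emph{once}; in a Picard iteration it must be applied at every step, and each application either shrinks the domain or costs a factor of the reciprocal distance to the boundary. Nesting polydiscs infinitely many times shrinks the domain to nothing, and keeping the domain fixed makes the constants blow up, so the scheme as written does not close: this is the classical loss-of-derivatives obstruction for quasilinear first-order equations. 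The shape of the domain $W_{T,R,r}=\{\varphi(t)/r+|x|<R\}$ in the statement is precisely the fingerprint of the device that \cite{BT1} and \cite{LRT} use to repair this, namely weighted (Nagumo-type) norms in which $R-\varphi(t)/r-|x|$ plays the role of distance to the boundary and the derivative loss is absorbed into the weight and into the decay of $\mu$. (An alternative repair is to prolong the equation to a coupled system for $(u,\partial_x u)$ so that the iteration never needs a second derivative of the previous iterate, at the price of verifying the compatibility $v=\partial_x u$ at the fixed point.) Without one of these mechanisms spelled out, requirements (i)--(iii) in your last paragraph cannot be met simultaneously, and the proof is incomplete.
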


%
%
\subsection{Uniqueness result in Case 2}\label{subsection3.1}

   The following theorem is the main result of this section.

\begin{thm}\label{Theorem3.2} 
    Suppose the conditions {\rm (\ref{3.2})}, {\rm (\ref{3.3})}, 
{\rm (\ref{3.4})} and ${\rm Re}\lambda(0,0)<0$. Let 
$u(t,x) \in {\mathscr X}_1((0,T) \times D_R)$ be a solution of 
{\rm (\ref{3.1})} with $T>0$ and $R>0$. If $u(t,x)$ satisfies
\begin{equation}\label{3.5}
     \varlimsup_{R \to +0} \, \biggl[ \, 
          \lim_{\sigma \to +0} \, \Bigl( \frac{1}{R^2}
       \sup_{(0,\sigma) \times D_R}|u(t,x)| \Bigr) \, \biggr]
       \, = \, 0, 
\end{equation}
we have $u(t,x)=u_0(t,x)$ 
on $(0,T_1) \times D_{R_1}$ for some $T_1>0$ and $R_1>0$,
where $u_0(t,x)$ is the solution obtained in 
Theorem \ref{Theorem3.1}.
\end{thm}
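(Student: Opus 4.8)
The plan is to follow, \emph{mutatis mutandis}, the reduction-plus-characteristics scheme used for Theorem~\ref{Theorem2.2}, and to insert the one new structural ingredient ${\rm Re}\,c(t,x)\le 0$ exactly at the two places where the singular transport term $xc(t,x)\,\partial/\partial x$ would otherwise be harmful. First I would let $u_0$ be the solution furnished by Theorem~\ref{Theorem3.1}, put $v_0=\partial u_0/\partial x$, and pass to $w=u-u_0$, $q=\partial w/\partial x$. Subtracting the two copies of (\ref{3.1}) and writing the difference of the $R_2$-terms as $\int_0^1\frac{d}{d\theta}R_2(t,x,u_0+\theta w,v_0+\theta q)\,d\theta$, one obtains
\[
   t\,\frac{\partial w}{\partial t}
   =(\lambda(t,x)+a_1(t,x,w,q))\,w+(\beta(t,x)+xc(t,x)+b_1(t,x,w,q))\,q ,
\]
where $a_1,b_1\in{\mathscr X}_0(\Omega^*)$ and, since every monomial of $R_2$ is of order $\ge2$ in $(u,v)$ while $|u_0|,|v_0|=O(\mu(t))$, one has $\sup_{x}|a_1(t,x,0,0)|=O(\mu(t))$ and $\sup_{x}|b_1(t,x,0,0)|=O(\mu(t))$. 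Note that the transport coefficient $\beta+xc$ is inherited unchanged, so (\ref{3.4}) still applies to it. It therefore suffices to prove the analogue of Proposition~\ref{Proposition2.5}: a solution $w\in{\mathscr X}_1((0,\sigma_0)\times D_{R_0})$ of the displayed equation satisfying (\ref{3.5}) vanishes on some $(0,\sigma)\times D_\delta$.

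For that I would reproduce Steps~1--5 of the proof of Proposition~\ref{Proposition2.5} with two adjustments. Freeze $w,q$ and set $a(t,x)=a_1(t,x,w,q)$, $b(t,x)=b_1(t,x,w,q)$, so $w$ satisfies the linear equation $t\,\partial_t w-(\beta+xc+b)\,\partial_x w=(\lambda+a)w$; differentiating in $x$ gives, for $q$,
\[
   t\,\frac{\partial q}{\partial t}-(\beta+xc+b)\frac{\partial q}{\partial x}
   =\gamma(t,x)\,w+\bigl(\lambda(t,x)+a(t,x)+c(t,x)+\ell(t,x)\bigr)\,q ,
\]
with $\gamma=\partial_x\lambda+\partial_x a$ and $\ell=\partial_x\beta+x\,\partial_x c+\partial_x b$, the term $c(t,x)$ coming from $\partial_x(xc)=c+x\,\partial_x c$. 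This extra summand $c$ in the coefficient of $q$ is the first place (\ref{3.4}) is used: since ${\rm Re}\,\lambda<-2a$ on a small domain, ${\rm Re}\,c\le0$, and $A=\sup|a|$, $L=\sup|\ell|$ can be made $<a$ by shrinking $\sigma,R$ (using (\ref{3.5}) and Cauchy's integral formula to make $|a|$, $|b|$, their $x$-derivatives, $|\partial_x\beta|$ and $R\cdot\sup|\partial_x c|$ all small), one keeps ${\rm Re}(\lambda+a)<-a$ and ${\rm Re}(\lambda+a+c+\ell)<-a$ along the characteristics, whence the decay estimates $|w^*(\tau)|\le(t_1/\tau)^a|w^*(t_1)|$ and $|q^*(\tau)|\le(t_1/\tau)^a(\Gamma|w^*(t_1)|\log(\tau/t_1)+|q^*(t_1)|)$ of Lemma~\ref{Lemma2.7} go through unchanged.

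The genuinely new point — and the step I expect to be the main obstacle — is controlling the characteristic curve $t\,dx/dt=-(\beta(t,x)+xc(t,x)+b(t,x))$, $x(t_0)=\xi$, since the $xc$-term is not small. Here I would use an energy estimate for $\rho(t)=|x(t)|^2$: from $t\,d\rho/dt=-2\,{\rm Re}\,c(t,x(t))\,\rho-2\,{\rm Re}\bigl((\beta+b)(t,x(t))\,\overline{x(t)}\bigr)$ and ${\rm Re}\,c\le0$ one gets $t\,d\rho/dt\ge-2|\beta+b|\sqrt\rho$, hence $d|x|/dt\ge-|\beta+b|/t$, hence
\[
   |x(t_1)|\le|\xi|+\int_{t_1}^{t_0}\frac{|\beta(t,x(t))|+|b(t,x(t))|}{t}\,dt ,
\]
which is precisely the bound of Lemma~\ref{Lemma2.8} once $\beta$'s $O(\mu)$-estimate is absorbed into the $B_0\mu$-term. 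Inserting the decay estimates of the previous paragraph together with the elementary integrals $\int_{t_1}^{t_0}(t_1/\tau)^a\,d\tau/\tau\le 1/a$ and $\int_{t_1}^{t_0}(t_1/\tau)^a\log(\tau/t_1)\,d\tau/\tau\le 1/a^2$ gives the analogue of Corollary~\ref{Corollary2.9}: for $\xi\in D_{R/2}$ the characteristic exists all the way down to $t=0$ and stays in a fixed compact subset of $D_R$ (the maximality argument via Coddington--Levinson is verbatim). Then, as in Step~5, $|w^*(t_0)|\le(t_1/t_0)^a r_1\to0$ as $t_1\to+0$ forces $w(t_0,\xi)=0$; since $t_0\in(0,\sigma)$ and $\xi\in D_{R/2}$ are arbitrary, $w\equiv0$ on $(0,\sigma)\times D_{R/2}$, and translating back through $w=u-u_0$ gives Theorem~\ref{Theorem3.2}. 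Everything beyond the $\rho(t)$-estimate and the verification that (\ref{3.4}) keeps ${\rm Re}(\lambda+a+c+\ell)<-a$ is a line-by-line transcription of the proof of Proposition~\ref{Proposition2.5}.
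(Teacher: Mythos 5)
Your proposal is correct and follows essentially the same route as the paper: reduce to $w=u-u_0$, differentiate to obtain the $q$-equation (where ${\rm Re}\,c\le 0$ absorbs the extra summand $c$ in the coefficient of $q$), and rerun the characteristic/decay argument of Proposition \ref{Proposition2.5}. The paper's proof is only a sketch that invokes Lemmas \ref{Lemma2.7}--\ref{Lemma2.8} ``by the same argument''; your energy estimate on $|x(t)|^2$, which uses ${\rm Re}\,c\le 0$ to discard the non-small $xc$-term from the characteristic ODE and recover the bound of Lemma \ref{Lemma2.8}, correctly supplies the one detail that sketch leaves implicit.
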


\begin{cor}\label{Corollary3.3}
     Suppose the conditions {\rm (\ref{3.2})}, {\rm (\ref{3.3})}, 
{\rm (\ref{3.4})} and ${\rm Re}\lambda(0,0)<0$.
If a solution $u(t,x) \in {\mathscr X}_1((0,T) \times D_R)$ 
of {\rm (\ref{3.1})} satisfies 
\[
     \lim_{t \to +0} \,\Bigl(\sup_{x \in D_R}|u(t,x)| \Bigr)\,
       = \, 0,
\]
we have $u(t,x)=u_0(t,x)$ on $(0,T_1) \times D_{R_1}$ for 
some $T_1>0$ and $R_1>0$.
\end{cor}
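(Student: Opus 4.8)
The plan is to deduce Corollary \ref{Corollary3.3} directly from Theorem \ref{Theorem3.2} by verifying that the pointwise-in-time decay hypothesis forces the weighted condition (\ref{3.5}); this is exactly the reduction by which Corollary \ref{Corollary2.3} was obtained from Theorem \ref{Theorem2.2}.

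First I would fix an $R>0$ for which $\lim_{t\to+0}\bigl(\sup_{x\in D_R}|u(t,x)|\bigr)=0$, and set $g(t)=\sup_{x\in D_R}|u(t,x)|$, a nonnegative function defined near $t=0$ with $g(t)\to0$ as $t\to+0$. For a small parameter $\sigma>0$ put $h(\sigma)=\sup_{0<t<\sigma}g(t)$; then $h(\sigma)$ is non-increasing in $\sigma$ and $h(\sigma)\to0$ as $\sigma\to+0$, since given $\varepsilon>0$ there is $\delta>0$ with $g(t)<\varepsilon$ for $0<t<\delta$, whence $h(\sigma)\le\varepsilon$ for $\sigma\le\delta$. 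Next, for any $R'\in(0,R]$ we have $D_{R'}\subset D_R$, so
\[
  \sup_{(0,\sigma)\times D_{R'}}|u(t,x)| \;\le\; \sup_{(0,\sigma)\times D_R}|u(t,x)| \;=\; h(\sigma),
\]
and letting $\sigma\to+0$ gives $\lim_{\sigma\to+0}\sup_{(0,\sigma)\times D_{R'}}|u(t,x)|=0$ for every fixed $R'\in(0,R]$. Dividing by the positive constant $R'^2$ leaves the value $0$, and taking $\varlimsup_{R'\to+0}$ of this identically-zero quantity yields $0$. Hence $u(t,x)$ satisfies (\ref{3.5}).

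Then I would invoke Theorem \ref{Theorem3.2} with this solution $u(t,x)$: it provides $T_1>0$ and $R_1>0$ such that $u(t,x)=u_0(t,x)$ on $(0,T_1)\times D_{R_1}$, where $u_0$ is the solution from Theorem \ref{Theorem3.1}. This is precisely the conclusion of the corollary.

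There is essentially no obstacle: the entire analytic content sits in Theorem \ref{Theorem3.2}, and the corollary is merely the remark that uniform-in-$x$ vanishing of $u$ as $t\to+0$ trivially implies the far weaker condition (\ref{3.5}). The only point worth stating with care is the elementary fact that $h(\sigma)=\sup_{0<t<\sigma}g(t)\to0$ as $\sigma\to+0$ when $g(t)\to0$, which makes the inner limit in (\ref{3.5}) vanish for each fixed radius.
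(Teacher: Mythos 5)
Your proposal is correct and follows exactly the paper's (implicit) route: just as Corollary \ref{Corollary2.3} is deduced from Theorem \ref{Theorem2.2} by observing that uniform vanishing as $t\to+0$ forces the inner limit in the weighted condition to be zero for every small radius, you verify that the hypothesis implies (\ref{3.5}) and then apply Theorem \ref{Theorem3.2}. The elementary monotonicity argument for $h(\sigma)\to 0$ is the only detail the paper omits, and you supply it correctly.
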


\begin{rem}\label{Remark3.4}
    (1) In the case ${\rm Re}\lambda(0,0)>0$
we have the following counter example: the equation
\[
     t \, \frac{\partial u}{\partial t}
    = 2u - x \frac{\partial u}{\partial x}
       + u \Bigl( \frac{\partial u}{\partial x} \Bigr)
\]
has a trivial solution $u \equiv 0$, a nontrivial solution $u=t^2$ 
and a family of solutions
\[
    u= \frac{xt}{c-t}
\]
with an arbitrary constant $c$. 
\par
   (2) In the case ${\rm Re}\lambda(0,0)=0$, we have the following
counter example: the equation
\[
     t \, \frac{\partial u}{\partial t}
    = - x \frac{\partial u}{\partial x}+u^2
       + \Bigl( \frac{\partial u}{\partial x} \Bigr)^2
\]
has a trivial solution $u \equiv 0$ and a family of nontrivial
solutions
\[
    u= \frac{1}{c- \log t}
\]
with an arbitrary constant $c$. 
\par
   (3) In the case ${\rm Re}\lambda(0,0) <0$, the following example 
shows that the condition (\ref{3.5}) is reasonable: the equation
\[
     t \, \frac{\partial u}{\partial t}
    = -u - x \frac{\partial u}{\partial x}
       + \Bigl( \frac{\partial u}{\partial x} \Bigr)^2
\]
has a trivial solution $u \equiv 0$ and a nontrivial solution
$u=3x^2/4$.
\end{rem}

%
%
\subsection{Proof of Theorem 3.2}\label{subsection3.2}

   Since the proof of Theorem \ref{Theorem3.2} is quite similar 
to the proof of Theorem \ref{Theorem2.2}, we give here only a sketch 
of the proof.
\par
   Let $u(t,x) \in {\mathscr X}_1((0,T) \times D_R)$ be a 
solution of (\ref{3.1}) satisfying (\ref{3.5}). Set 
$w(t,x)=u(t,x)-u_0(t,x)$ where $u_0(t,x)$ is the solution obtained 
in Theorem \ref{Theorem3.1}. Then, by the same argument as in 
(\ref{2.8}) we see that $w(t,x)$ satisfies a partial 
differential equation of the form
\begin{equation}\label{3.6}
     t \frac{\partial w}{\partial t}
    - (b(t,x)+xc(t,x)) \frac{\partial w}{\partial x}
     = (\lambda(t,x)+a(t,x))w, 
\end{equation}
on $(0,\sigma_0) \times D_{R_0}$ for some $\sigma_0>0$ and
$R_0>0$. where $a(t,x)$ and $b(t,x)$ are functions belonging to
${\mathscr X}_0((0,\sigma_0)\times D_{R_0})$ that satisfy
\begin{align*}
   &\lim_{\sigma \to +0} 
         \sup_{(0,\sigma) \times D_R}|a(t,x)|= o(R) \quad
                  \mbox{(as $R \longrightarrow +0$)}, \\
   &\lim_{\sigma \to +0} 
         \sup_{(0,\sigma) \times D_R}|b(t,x)|= o(R) \quad
                   \mbox{(as $R \longrightarrow +0$)}.
\end{align*}
By applying $\partial/\partial x$ to (\ref{3.6}) we have
\begin{align}
    &t \frac{\partial q}{\partial t}
    - (b(t,x)+xc(t,x)) \frac{\partial q}{\partial x}
        \label{3.7} \\
    &= \gamma(t,x)w + (\lambda(t,x)+a(t,x)+c(t,x)+\ell(t,x))q,
         \notag
\end{align}
where 
\begin{align*}
    &\gamma(t,x)= (\partial \lambda/\partial x)(t,x)
         + (\partial a/\partial x)(t,x), \\
    &\ell(t,x)= (\partial b/\partial x)(t,x)
             + x (\partial c/\partial x)(t,x):
\end{align*}
these are also functions belonging to
${\mathscr X}_0((0,\sigma_0) \times D_{R_0})$.  If we notice the 
fact that $|x(\partial c/\partial x)(t,x)| \leq C_1|x|$ 
on $(0,\sigma_0) \times D_{R_0}$ for some $C_1>0$, 
by taking $\sigma>0$ and $R>0$ sufficiently small we have
the same conditions as in Lemma \ref{Lemma2.6}.
\par
   Now, let us consider the initial value problem:
\begin{equation}\label{3.8}
   t \,\frac{dx}{dt} = -(b(t,x)+xc(t,x)), 
     \quad x(t_0)=\xi.
\end{equation}
Let $x(t)$ be the unique solution in a neighborhood of $t=t_0$. 
Let $(t_{\xi},t_0]$ be the maximal interval of the existence of 
this solution. Set
\[
      w^*(t)=w(t,x(t)), \quad q^*(t)=q(t,x(t)).
\]
Since ${\rm Re}\,c(t,x) \leq 0$ is supposed (in (\ref{3.4})), we have 
${\rm Re}\,c(s,x(s)) \leq 0$, and so 
${\rm Re}(\lambda(s,x(s))+a(s,x(s))
         +c(s,x(s))+ \ell(s,x(s)))<-2a+A+0+L <-a$. Hence, by the 
same argument as in the proof of Theorem \ref{Theorem2.2} we can 
show the same conditions as in Lemmas \ref{Lemma2.7}, \ref{Lemma2.8} 
and Corollary \ref{Corollary2.9}. 
\par
    Thus, we have $w(t,x)=0$ on $(0,\sigma) \times D_{R/2}$
as in Step 5 in the proof of Theorem \ref{Theorem2.2}. This proves
Theorem \ref{Theorem3.2}.  \qed

%
%
\subsection{Application}\label{subsection3.3}

    Let us apply Theorem \ref{Theorem3.2} to the problem 
of analytic continuation of solutions of nonlinear totally
characteristic type partial differential equations.
\par
   Let us consider the same equation 
\begin{equation}\label{3.9}
   t \,\frac{\partial u}{\partial t}
   = F \Bigl( t,x,u, \frac{\partial u}{\partial x} \Bigr)
\end{equation}
as in (\ref{2.18}) in the complex domain $\Delta$ under ${\rm B}_1)$, 
${\rm B}_2)$ and 
\par
\medskip
   ${\rm B}_4)$ \enskip $(\partial F/\partial v)
           (0,x,0,0)= x c(x)$ with $c(0) \ne 0$.
\par
\medskip
\noindent
Then, this equation is a typical model of nonlinear totally
characteristic partial differential equations discussed by
Chen-Tahara \cite{chen1}. As in subsection 2.3 we set
$\lambda(x)=(\partial F/\partial u)(0,x,0,0)$. We write
$\BN^*=\{1,2,\ldots \}$ and $\BN=\{0,1,2,\ldots \}$.
\par
   Then, by \cite{chen1} we know the following result:
if $c(0) \not\in [0,\infty)$ and
\begin{equation}\label{3.10}
      i-c(0)j-\lambda(0) \ne 0 \quad 
      \mbox{for any $(i,j) \in \BN^* \times \BN$}
\end{equation}
hold, equation (\ref{3.9}) has a unique holomorphic solution 
$u_0(t,x)$ in a neighborhood of $(0,0) \in \BC_t \times \BC_x$ 
satisfying $u_0(0,x)=0$ near $x=0$.  Therefore, by applying 
Theorem \ref{Theorem3.2} (with $\mu(t)=t$) to this case we have

\begin{thm}\label{Theorem3.5}
    Suppose the conditions ${\rm B}_1)$, ${\rm B}_2)$, ${\rm B}_4)$, 
${\rm Re}\,c(0)<0$ and ${\rm Re}\lambda(0)<0$. Let $u(t,x)$ be a 
holomorphic solution of {\rm (\ref{3.9})} in a neighborhood of 
$(0,\sigma_0) \times D_{R_0}$ for some $\sigma_0>0$ and $R_0>0$. 
If $u(t,x)$ satisfies
\begin{equation}\label{3.11}
     \varlimsup_{R \to +0} \, \biggl[ \, 
          \lim_{\sigma \to +0} \, \Bigl( \frac{1}{R^2}
       \sup_{(0,\sigma) \times D_R}|u(t,x)| \Bigr) \, \biggr]
       \, = \, 0, 
\end{equation}
$u(t,x)$ can be continued holomorphically up to a neighborhood of 
$(0,0) \in \BC_t \times \BC_x$.
\end{thm}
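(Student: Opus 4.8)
The plan is to derive Theorem~\ref{Theorem3.5} from Theorem~\ref{Theorem3.2} applied with the weight $\mu(t)=t$. The first point is that the standing hypotheses here are strong enough to produce, via the Chen--Tahara theorem, a holomorphic reference solution near the origin. Since ${\rm Re}\,c(0)<0$ we have $c(0)\notin[0,\infty)$, and for every $(i,j)\in\BN^*\times\BN$,
\[
   {\rm Re}\bigl(i-c(0)j-\lambda(0)\bigr)
     = i-({\rm Re}\,c(0))\,j-{\rm Re}\,\lambda(0)
     \geq 1-{\rm Re}\,\lambda(0)>0 ,
\]
so $i-c(0)j-\lambda(0)\neq0$, i.e. condition (\ref{3.10}) holds. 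Hence by \cite{chen1} there is a holomorphic solution $u_0(t,x)$ of (\ref{3.9}) on a polydisc $W$ around $(0,0)\in\BC_t\times\BC_x$ with $u_0(0,x)\equiv0$; in particular $|u_0(t,x)|\leq Mt$ on $W$.

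Next I would rewrite (\ref{3.9}) in the form (\ref{3.1}). Using ${\rm B}_1)$, Taylor-expand $F$ in $(u,v)$ and set $\alpha(t,x)=F(t,x,0,0)$, $\lambda(t,x)=(\partial F/\partial u)(t,x,0,0)$, $c(t,x)=c(x)$, $\beta(t,x)=(\partial F/\partial v)(t,x,0,0)-xc(x)$, with $R_2$ the remaining part, which is holomorphic with a Taylor expansion in $(u,v)$ starting at order $2$. By ${\rm B}_2)$, $\alpha(0,x)\equiv0$, so $\sup_{x\in D_{R_0}}|\alpha(t,x)|=O(t)$; by ${\rm B}_4)$, $\beta(0,x)\equiv0$, so $\sup_{x\in D_{R_0}}|\beta(t,x)|=O(t)$; and ${\rm Re}\,c(0)<0$ gives ${\rm Re}\,c(x)\leq0$ on $D_{R_0}$ after shrinking $R_0$. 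Thus (\ref{3.2})--(\ref{3.4}) hold with $\mu(t)=t$, and ${\rm Re}\,\lambda(0,0)={\rm Re}\,\lambda(0)<0$.

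Now I would apply Theorem~\ref{Theorem3.2} twice. The Chen--Tahara solution $u_0$, being $O(t)$, satisfies condition (\ref{3.5}) trivially, and the given $u(t,x)$ --- holomorphic on a neighborhood of $(0,\sigma_0)\times D_{R_0}$, hence restricting to an element of ${\mathscr X}_1((0,T)\times D_R)$ for suitable small $T,R$ --- satisfies (\ref{3.5}) by the hypothesis (\ref{3.11}). Applying Theorem~\ref{Theorem3.2} to each shows that both coincide with the solution furnished by Theorem~\ref{Theorem3.1} on a neighborhood of the origin, and therefore $u(t,x)=u_0(t,x)$ on $(0,T_1)\times D_{R_1}$ for some $T_1,R_1>0$. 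Finally, $u$ and $u_0$ are both holomorphic in $(t,x)$ and agree on this set, on which $t$ is real; fixing $x\in D_{R_1}$ and noting that $t\mapsto u(t,x)-u_0(t,x)$ is holomorphic in a complex neighborhood of $(0,T_1)$ and vanishes on the real interval, one gets $u\equiv u_0$ on a nonempty open subset of $\BC_t\times\BC_x$. Since $W$ is a polydisc, $u_0$ is then the single-valued holomorphic continuation of the germ of $u$ at any point $(t_*,0)$ with $t_*\in(0,T_1)$ small enough, and this exhibits $u$ as continued holomorphically to the neighborhood $W$ of $(0,0)$.

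I expect the only genuinely delicate point to be this last passage, since the set on which Theorem~\ref{Theorem3.2} gives equality is not open in $\BC_t\times\BC_x$ (the $t$-variable there runs over a real interval): one must first use the one-variable identity theorem in $t$ to upgrade ``equality on the real slice'' to ``equality on a full complex neighborhood'' before invoking analytic continuation. Everything else --- verifying (\ref{3.10}), matching (\ref{3.9}) with (\ref{3.1}), and checking the growth conditions with $\mu(t)=t$ --- is routine bookkeeping relying only on ${\rm B}_1)$, ${\rm B}_2)$, ${\rm B}_4)$ and the sign conditions ${\rm Re}\,c(0)<0$, ${\rm Re}\,\lambda(0)<0$.
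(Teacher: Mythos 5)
Your proposal is correct and takes essentially the same route as the paper: the paper likewise obtains the holomorphic solution $u_0$ from the Chen--Tahara result (the hypotheses ${\rm Re}\,c(0)<0$, ${\rm Re}\,\lambda(0)<0$ guaranteeing $c(0)\notin[0,\infty)$ and (\ref{3.10}) exactly as you compute) and then invokes Theorem \ref{Theorem3.2} with $\mu(t)=t$ to identify $u$ with $u_0$ near the origin. The details you supply --- the reduction of (\ref{3.9}) to the form (\ref{3.1}), and the identity-theorem step in $t$ upgrading equality on the real slice to equality on an open set --- are exactly what the paper leaves implicit.
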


\begin{rem}\label{Remark3.6}
     The following example shows that we need some condition like 
(\ref{3.11}) in order to get the analytic continuation of solutions: 
the equation
\[
     t \frac{\partial u}{\partial t}
     = - 2u -x \frac{\partial u}{\partial x}
            +2 xt \Bigl(\frac{\partial u}{\partial x} \Bigr)^2
\]
has a solution $u=x/t$.
\end{rem}

%
%
\section{Analysis in Case 3}\label{section4}

   Let us consider Case 3 in a little bit restricted setting.
Let $p \in \{1,2,3,\ldots\}$: we consider the equation 
\begin{align}
     t \,\frac{\partial u}{\partial t}
     = &\alpha(t,x) + \lambda(t,x)u +(\beta(t,x)+x^pc(t,x))
           \Bigl(x \frac{\partial u}{\partial x} \Bigr)
             \label{4.1} \\
       &+R_2 \Bigl(t,x,u, x\frac{\partial u}{\partial x} \Bigr)
                  \notag
\end{align}
where $\alpha(t,x)$, $\lambda(t,x)$, $\beta(t,x)$ and $c(t,x)$ are 
continuous functions on $[0,T_0] \times D_{R_0}$ that are holomorphic 
in $x$ for any fix $t$ and satisfy
\begin{align}
   &\sup_{x \in D_{R_0}}|\alpha(t,x)|= O(\mu(t)) 
               \quad \mbox{(as $t \longrightarrow +0$)}, 
           \label{4.2} \\
   &\sup_{x \in D_{R_0}}|\beta(t,x)|=O(\mu(t)) 
                 \quad \mbox{(as $t \longrightarrow +0$)}, 
          \label{4.3} \\
   &c(0,0) \ne 0,  \label{4.4}
\end{align}
and $R_2(t,x,u,v)$ is the same as in (\ref{3.1}).
In this case, equations of this type were studied by 
Chen-Luo-Zhang \cite{CLZ}, Luo-Chen-Zhang \cite{LCZ} and 
Bacani-Tahara \cite{BT2}.
\par
   By applying the change of variable $x \longrightarrow e^{i\theta}x$ 
in equation (\ref{4.1}) we see that $x^pc(t,x)$ is transformed into
$x^p(e^{i p \theta}c(t,e^{i \theta}x))$ and so by taking $\theta$ 
suitably we have the condition: $e^{i p \theta}c(0,0)<0$. Hence, 
without loss of generality we may assume
\begin{equation}\label{4.5}
           c(0,0)<0  
\end{equation}
from the first. For simplicity, we suppose this condition 
from now.
\par
   As to the existence of a solution, we know a unique solvability 
result. In order to state the existence result, we prepare some
notations: for $T>0$, $R>0$, $0<\theta<\pi/2p$ and $r>0$ we set
\begin{align*}
    &S = S(\theta,R)
          =\{x \in \BC \,;\, 0<|x|<R, |\arg x|<\theta \}, \\
    &d_{S}(x)= \min \bigl\{\log (R/|x|), \theta-|\arg x| \bigr\}, \\
    &W_{T,R,\theta,r}= \{(t,x) \in (0,T) \times S \,;\,
         \varphi(t)/r < d_S(x) \}.
\end{align*}
Then, by [Theorem 8.1 in \cite{BT2}] we have

\begin{thm}\label{Theorem4.1} 
    Suppose the conditions {\rm (\ref{4.2})}, {\rm (\ref{4.3})} 
and {\rm (\ref{4.5})}. 
If ${\rm Re}\lambda (0,0)<0$ holds, there are $T>0$, $R>0$, 
$0<\theta<\pi/2p$ and $r>0$ such that equation {\rm (\ref{4.1})} has 
a unique solution $u_0(t,x) \in {\mathscr X}_1(W_{T,R,\theta,r})$ 
satisfying
\[
     |u_0(t,x)| \leq M\mu(t) \quad \mbox{and} \quad 
       \Bigl|x \frac{\partial u_0}{\partial x}(t,x)
       \Bigr| \leq M\mu(t)
\]
on $W_{T,R,\theta,r}$ for some $M>0$.
\end{thm}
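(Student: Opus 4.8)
This is an existence-and-uniqueness statement quoted from [BT2, Theorem 8.1], so the proof strategy is to reduce it to that reference, but let me sketch how the construction goes in case one wants a self-contained argument adapted to the present notation. The plan is to work on the sectorial domain $W_{T,R,\theta,r}$, where the weight $d_S(x)$ plays the role that $R-|x|$ (or $\varphi(t)/r+|x|<R$) plays in Cases 1 and 2; the key geometric point is that $d_S(x)$ is a Lipschitz exhaustion function of the sector $S(\theta,R)$ that decays to $0$ at the boundary, and that the vector field $x\,c(0,0)\,x\partial_x$ (the leading part of the characteristic direction, after the normalization \eqref{4.5}) is \emph{inward-pointing} relative to $d_S$: since $c(0,0)<0$, the real flow of $t\,dx/dt = x^{p}c(t,x)\cdot x$ moves $|x|$ and $\arg x$ so as to increase $d_S(x)$, which is what makes the domain $W_{T,R,\theta,r}$ invariant under the characteristic curves for small $t$.

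First I would set up the iteration space: let $u_0$ be sought as the limit of $u_n$ defined by solving the linearized equation
\[
t\,\frac{\partial u_{n+1}}{\partial t} = \lambda(t,x)\,u_{n+1} + (\beta(t,x)+x^{p}c(t,x))\,x\frac{\partial u_{n+1}}{\partial x} + \alpha(t,x) + R_2\Bigl(t,x,u_n,x\frac{\partial u_n}{\partial x}\Bigr),
\]
starting from $u_0\equiv 0$. Each such linear first-order equation is solved by the characteristic method: along the curve $t\,dx/dt = -(\beta+x^{p}c)\,x$ with a terminal condition, one integrates an ODE of the form $t\,dw^*/dt = \lambda(t,x(t))w^* + (\text{source})$, and the factor $(t_1/\tau)^a$ coming from $\mathrm{Re}\,\lambda(0,0)<-2a<0$ gives the decay $|u_{n+1}|\le M\mu(t)$ after using $\int_0^t \mu(s)/s\,ds = \varphi(t)$ to absorb the inhomogeneous term. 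The derivative estimate $|x\partial_x u_{n+1}|\le M\mu(t)$ is obtained exactly as in Step 3–4 of the proof of Theorem \ref{Theorem2.2}: differentiate the equation, noting that $x\partial_x$ commutes with the vector field $x\partial_x$ up to lower-order terms, pick up an extra $\log(\tau/t_1)$ factor, and close the estimate on a slightly shrunk domain $W_{T,R,\theta,r}$ by enlarging $r$. Then one shows $\{u_n\}$ is Cauchy in the sup-norm weighted by $1/\mu(t)$ on such a domain, again using that the quadratic nonlinearity $R_2$ contributes a factor that is $O(\mu(t))$ hence small, and uniqueness within the class $|u|\le M\mu(t)$, $|x\partial_x u|\le M\mu(t)$ follows from the same contraction.

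The main obstacle, and the reason this is nontrivial compared with Cases 1 and 2, is the \emph{irregular} singularity: the characteristic vector field degenerates like $x^{p+1}$ at $x=0$ (after writing $x^{p}c(t,x)\cdot x\partial_x$), so the characteristic curves issuing from a point near the positive real axis do \emph{not} reach $x=0$ in finite "time" $t\to+0$ — they spiral or stagnate — and one must instead run them to the boundary of the sector. Controlling this requires the precise inequality that $d_S(x(t))$ stays bounded below along characteristics for $t\in(0,T)$, which is where the condition $0<\theta<\pi/2p$ enters: it guarantees that $\mathrm{Re}(e^{ip\arg x}c(0,0))<0$ throughout the sector, so the flow does not exit through the rays $|\arg x|=\theta$. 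Making this quantitative — i.e. proving that $\varphi(t)/r<d_S(x(t))$ is preserved, which is the analogue of Corollary \ref{Corollary2.9} — is the delicate estimate; everything else is a routine adaptation of the argument already carried out in Section \ref{section2}. Since the statement is quoted verbatim from [BT2], I would in the paper simply cite it and move on to the uniqueness theorem that parallels Theorems \ref{Theorem2.2} and \ref{Theorem3.2}.
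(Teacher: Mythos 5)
Your proposal matches the paper exactly: Theorem \ref{Theorem4.1} is stated without proof and is simply quoted as Theorem 8.1 of Bacani--Tahara \cite{BT2}, which is precisely what you conclude one should do. Your additional sketch of the sectorial characteristic-method construction is a reasonable outline of how such a result is proved, but the paper itself supplies no argument beyond the citation.
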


%
%
\subsection{Uniqueness result in Case 3}\label{subsection4.1}

   The following theorem is the main result of this section.

\begin{thm}\label{Theorem4.2}
   Suppose {\rm (\ref{4.2})}, {\rm (\ref{4.3})}, {\rm (\ref{4.5})} 
and ${\rm Re}\lambda (0,0)<0$.  Let 
$u(t,x) \in {\mathscr X}_1((0,T) \times S(\theta,R))$ be a solution 
of {\rm (\ref{4.1})} with $T>0$, $\theta>0$ and $R>0$. If $u(t,x)$ 
satisfies
\begin{equation}\label{4.6}
     \varlimsup_{\eta \to +0} \, \biggl[ \, 
          \lim_{\sigma \to +0} \, \Bigl( \frac{1}{\eta^2}
       \sup_{(0,\sigma) \times S(\eta \theta, \eta R)}
        |u(t,x)| \Bigr) \, \biggr] \, = \, 0,
\end{equation}
we have $u(t,x)=u_0(t,x)$ on $(0, T_1) \times S(\theta_1,R_1)$ 
for some $T_1>0$, $\theta_1>0$ and $R_1>0$, 
where $u_0(t,x)$ is the solution obtained in Theorem \ref{Theorem4.1}.
\end{thm}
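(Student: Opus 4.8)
The strategy is to imitate the proof of Theorem \ref{Theorem2.2} (and its reduction to Proposition \ref{Proposition2.5}), tracking how the characteristic ODE and the estimates change when the singular coefficient $\beta(t,x)+x^pc(t,x)$ multiplies $x\,\partial u/\partial x$ rather than $\partial u/\partial x$. First I would let $u_0(t,x)$ be the solution from Theorem \ref{Theorem4.1} and set $w=u-u_0$; subtracting the two equations (\ref{4.1}) as in the passage from (\ref{1.1}) to (\ref{2.6}) and then to (\ref{3.6}), one obtains for $w$ a linear equation of the form
\[
   t\frac{\partial w}{\partial t}-\bigl(b(t,x)+x^pc(t,x)\bigr)\Bigl(x\frac{\partial w}{\partial x}\Bigr)
    =(\lambda(t,x)+a(t,x))w
\]
on $(0,\sigma_0)\times S(\theta_0,R_0)$, where $a,b$ are small in the same $o(\eta)$ sense with respect to the scaling $S(\eta\theta,\eta R)$ that hypothesis (\ref{4.6}) provides. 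Differentiating in $x$ (or rather applying $x\,\partial/\partial x$, which is the natural operator here) yields the companion equation for $q=x\,\partial w/\partial x$, with an extra zeroth-order term coming from $(x\partial/\partial x)(x^pc)=px^pc+x^{p+1}\partial c/\partial x$; crucially the leading part of this extra term is $px^pc(t,x)$, whose real part is (after the normalization (\ref{4.5})) negative near the origin, so it only helps the exponential decay estimates of Lemma \ref{Lemma2.7}.

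Next I would set up the characteristic ODE $t\,dx/dt=-(b(t,x)+x^pc(t,x))$, $x(t_0)=\xi$, on the sector, and study its maximal backward interval $(t_\xi,t_0]$. The key structural point — the analogue of condition (\ref{3.4}) in Case 2 — is that on the sector $S(\theta,R)$ with $0<\theta<\pi/2p$ we have $\mathrm{Re}\bigl(x^pc(t,x)\cdot\overline{(\text{outward direction})}\bigr)\le 0$ near the origin, because $c(0,0)<0$ and $|\arg x^p|<p\theta<\pi/2$, so the characteristic curve is pushed \emph{into} the sector, away from the two boundary pieces $\{|x|=R\}$ and $\{|\arg x|=\theta\}$ measured by $d_S(x)$. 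This is what replaces Corollary \ref{Corollary2.9}: one shows that if $\xi$ lies in a slightly smaller sector $S(\theta/2,R/2)$ then $t_\xi=0$, by the same Coddington--Levinson extension argument, using a Gronwall-type bound on $d_S(x(t))$ in place of the bound on $|x(t_1)|$ in Lemma \ref{Lemma2.8}. Along the characteristic, $w^*(t)=w(t,x(t))$ satisfies $t\,dw^*/dt=(\lambda+a)w^*$ with $\mathrm{Re}(\lambda+a)<-a$, giving $|w^*(\tau)|\le(t_1/\tau)^a|w^*(t_1)|$; letting $t_1\to+0$ forces $w^*(t_0)=0$, hence $w\equiv0$ on $(0,\sigma)\times S(\theta_1,R_1)$, exactly as in Step 5.

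The technical engine is again the Cauchy-estimate passage from a bound on $|w|$ to a bound on $|x\,\partial w/\partial x|$: from (\ref{4.6}), $\sup_{(0,\sigma)\times S(\eta\theta,\eta R)}|w|=o(\eta^2)$, and applying Cauchy's formula on a disc of radius comparable to $\eta|x|$ around a point $x$ with $|x|\sim\eta R$ inside the sector gives $\sup|x\,\partial w/\partial x|=o(\eta)$, and likewise for $a,b$ and for $(x\partial/\partial x)b$; this is what makes the analogues of Lemma \ref{Lemma2.6} and Lemma \ref{Lemma2.8} go through after shrinking $\sigma$ and $\eta$. The main obstacle — and the one place where real care is needed beyond transcribing the Case 2 argument — is the geometry of the characteristic flow near the \emph{lateral} boundary $\{|\arg x|=\theta\}$ of the sector: one must verify that the drift term $-x^pc(t,x)/t$, together with the error $-b(t,x)/t$, does not carry $x(t)$ out through either boundary component of $S(\theta,R)$, uniformly in the starting point, which is precisely why the opening angle is restricted to $\theta<\pi/2p$ and why one works with the distance function $d_S(x)$ rather than with $|x|$ alone. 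Once the correct differential inequality for $d_S(x(t))$ is established — showing $d_S(x(t))$ stays bounded below along $(t_\xi,t_0]$ — the rest is a routine adaptation of Steps 1--5 above.
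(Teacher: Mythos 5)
Your overall skeleton is the paper's: reduce to $w=u-u_0$, pass to the linear transport equations for $w$ and $q=x\,\partial w/\partial x$, use a Nagumo/Cauchy estimate on shrinking sectors to make $a$, $b$, $\ell$ small, integrate along backward characteristics to get $|w^*(\tau)|\le (t_1/\tau)^a|w^*(t_1)|$, show $t_\xi=0$ by confining the trajectory, and let $t_1\to+0$. But the one step you flag as needing "real care" is exactly the step you have not supplied, and your proposed substitute would not close as stated. First, a slip that propagates: the characteristic ODE is $t\,dx/dt=-x\bigl(b(t,x)+x^pc(t,x)\bigr)$, with the extra factor $x$ coming from $x\,\partial w/\partial x$; you drop it (both in the ODE and in the later reference to the drift $-x^pc/t$), and with the wrong vector field $w(t,x(t))$ does not satisfy the scalar ODE you rely on. Second, and more substantively, your confinement argument via a Gronwall inequality for $d_S(x(t))$ is inadequate on two counts. (i) $d_S(x)=\min\{\log(R/|x|),\,\theta-|\arg x|\}$ does not control the distance to the vertex: a lower bound on $d_S(x(t))$ is compatible with $x(t)\to 0$ at some $t_1>t_\xi$, whereas the Coddington--Levinson extension argument needs a compact subset of the \emph{punctured} sector, hence a positive lower bound on $|x(t)|$ as well (the paper's (4.17), which uses $t_\xi>0$ through the factor $\log(t_0/t_1)$; a crude bound $t\,d|x|/dt\ge -M|x|$ would also do, but it must be stated). (ii) The heuristic that the drift points "into the sector, away from both boundary pieces" is not literally true: because $\arg(-c(t,x))$, $\arg\phi(t)$ and the error $b$ are only \emph{small}, not zero, $|\arg x(t)|$ can increase along the backward flow, and the paper's bound (4.18) is $|\arg x(t_1)|\le 2|\arg\xi|+2\theta_\phi+\epsilon_1/p$ --- note the factor $2$, which forces one to start from $\xi\in S(\theta/3,R/3)$ rather than $S(\theta/2,R/2)$. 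A monotonicity or naive differential inequality for $\theta-|\arg x(t)|$ therefore fails; the paper instead integrates the characteristic equation in closed form,
\[
   x(t_1)^{-p}=\phi(t_1)^p\Bigl(\xi^{-p}-p\int_{t_1}^{t_0}
      \frac{c(\tau,x(\tau))}{\phi(\tau)^p}\,\frac{d\tau}{\tau}\Bigr),
\]
and uses that the two summands lie in a common convex cone of half-angle $p|\arg\xi|+\epsilon_1+p\theta_\phi\le\pi/2$ to control both $|x(t_1)|$ and $\arg x(t_1)$ simultaneously. This explicit integration (Step 4 and Lemmas 4.9--4.10 of the paper) is the genuine content of the proof in Case 3, and it is the piece missing from your plan.

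A minor further point: your claim that the extra zeroth-order term $px^pc$ in the $q$-equation "only helps" because its real part is negative is unnecessary --- the paper simply absorbs it into $\ell(t,x)$ and makes $L=\sup|\ell|$ small since $x^p\to 0$ on shrinking sectors --- but it is not wrong.
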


\begin{cor}\label{Corollary4.3}
    Suppose the conditions {\rm (\ref{4.2})}, {\rm (\ref{4.3})}, 
{\rm (\ref{4.5})} and ${\rm Re}\lambda (0,0)<0$. Let 
$u(t,x) \in {\mathscr X}_1((0,T) \times S(\theta,R))$ be a solution 
of {\rm (\ref{4.1})}. If $u(t,x)$ satisfies
\[
     \lim_{t \to +0} \, \Bigl( 
            \sup_{x \in S(\theta,R)}|u(t,x)| \Bigr) \, = \, 0, 
\]
we have $u(t,x)=u_0(t,x)$ on $(0, T_1) \times S(\theta_1,R_1)$ 
for some $T_1>0$, $\theta_1>0$ and $R_1>0$.
\end{cor}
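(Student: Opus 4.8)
The plan is to derive Corollary~\ref{Corollary4.3} from Theorem~\ref{Theorem4.2} by verifying that the hypothesis $\lim_{t\to+0}(\sup_{x\in S(\theta,R)}|u(t,x)|)=0$ is stronger than condition (\ref{4.6}). Throughout, $u(t,x)\in{\mathscr X}_1((0,T)\times S(\theta,R))$ denotes the given solution of (\ref{4.1}).

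First I would record the elementary geometric fact that for every $\eta\in(0,1)$ one has the inclusion $S(\eta\theta,\eta R)\subseteq S(\theta,R)$: indeed, if $0<|x|<\eta R$ and $|\arg x|<\eta\theta$, then a fortiori $|x|<R$ and $|\arg x|<\theta$, since $\eta<1$. Consequently, for any $\sigma>0$,
\[
   \sup_{(0,\sigma)\times S(\eta\theta,\eta R)}|u(t,x)|
     \;\le\; \sup_{(0,\sigma)\times S(\theta,R)}|u(t,x)| .
\]

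Next, setting $g(t)=\sup_{x\in S(\theta,R)}|u(t,x)|$, the map $\sigma\mapsto\sup_{t\in(0,\sigma)}g(t)$ is nondecreasing, so its limit as $\sigma\to+0$ exists and equals $\varlimsup_{t\to+0}g(t)$, which is $0$ by assumption. Hence $\lim_{\sigma\to+0}\sup_{(0,\sigma)\times S(\theta,R)}|u(t,x)|=0$, and therefore, for each fixed $\eta\in(0,1)$,
\[
   \lim_{\sigma\to+0}\Bigl(\frac{1}{\eta^2}
      \sup_{(0,\sigma)\times S(\eta\theta,\eta R)}|u(t,x)|\Bigr)
     \;\le\; \frac{1}{\eta^2}\lim_{\sigma\to+0}
      \sup_{(0,\sigma)\times S(\theta,R)}|u(t,x)| \;=\; 0 .
\]
Thus the quantity inside the outer bracket of (\ref{4.6}) is $0$ for every $\eta\in(0,1)$, so its $\varlimsup$ as $\eta\to+0$ is again $0$; that is, (\ref{4.6}) holds. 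Applying Theorem~\ref{Theorem4.2} now yields $u(t,x)=u_0(t,x)$ on some $(0,T_1)\times S(\theta_1,R_1)$, which is precisely the assertion of the corollary.

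There is no genuinely hard step here: the argument is merely a comparison of two $\varlimsup$-type smallness conditions. The only point that requires a little care is the monotonicity/interchange step identifying $\lim_{\sigma\to+0}\sup_{(0,\sigma)\times S(\theta,R)}|u|$ with $\varlimsup_{t\to+0}g(t)$, together with checking that rescaling the sector by $\eta<1$ shrinks rather than enlarges the domain, so that the otherwise harmful factor $1/\eta^2$ multiplies a quantity that is already $0$.
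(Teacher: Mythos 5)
Your proposal is correct and matches the paper's (implicit) argument: the paper states Corollary \ref{Corollary4.3} as an immediate consequence of Theorem \ref{Theorem4.2}, exactly as it does for Corollary \ref{Corollary2.3}, because the uniform decay hypothesis forces the bracketed quantity in (\ref{4.6}) to vanish for every fixed $\eta$. Your spelling out of the inclusion $S(\eta\theta,\eta R)\subseteq S(\theta,R)$ and the identification of $\lim_{\sigma\to+0}\sup_{(0,\sigma)\times S(\theta,R)}|u|$ with $\varlimsup_{t\to+0}g(t)$ is exactly the routine verification the paper leaves to the reader.
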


\begin{rem}\label{Remark4.4}
     (1) In the case ${\rm Re}\lambda(0,0)>0$
we have the following counter example: the equation
\[
     t \, \frac{\partial u}{\partial t}
    = 2u - x^2 \frac{\partial u}{\partial x}
       + \frac{x^2t}{(1-t)}\frac{\partial u}{\partial x} 
\]
has a trivial solution $u \equiv 0$, a nontrivial solution
$u=t^2$ and a family of solutions
\[
    u= \frac{c\,te^{-1/x}}{1-t}
\]
with an arbitrary constant $c$. In this case we have $p=1$, 
$\lambda(t,x)=2$, $c(t,x)=-1$, $\beta(t,x)=xt/(1-t)$, $R_2 \equiv 0$ 
and $\mu(t)=t$.
\par
   (2) In the case ${\rm Re}\lambda(0,0)= 0$
we have the following counter example: the equation
\[
     t \, \frac{\partial u}{\partial t}
    = - x^2 \frac{\partial u}{\partial x} +u^2 
       + \Bigl( x \frac{\partial u}{\partial x} \Bigr)^2
\]
has a trivial solution $u \equiv 0$ and a family of nontrivial
solution
\[
    u= \frac{1}{c- \log t}
\]
with an arbitrary constant $c$.
\par
    (3) We note: the equation
\[
     t \, \frac{\partial u}{\partial t}
    = -u - x^2 \frac{\partial u}{\partial x} 
       + t \Bigl( x \frac{\partial u}{\partial x} \Bigr)^2
\]
has a trivial solution $u \equiv 0$ and a nontrivial solution
$u=x/t$. This shows that even in the case ${\rm Re}\lambda(0,0)< 0$,
in order to get a uniqueness result we need some condition on the 
behavior of $u(t,x)$ (as $t \longrightarrow +0$). But, 
unfortunately the author does not know whether our assumption 
(\ref{4.6}) is reasonable or not: he has no good examples.
\end{rem}

%
%
\subsection{Proof of Theorem 4.2}\label{subsection4.2}

   Let $u(t,x) \in {\mathscr X}_1((0,\sigma_0) 
                             \times S(\theta_0,R_0))$ 
be a solution of (4.1) satisfying (4.6) (with $\theta$ and
$R$ replaced by $\theta_0$ and $R_0$, respectively). We may 
suppose: $0<\theta_0<\pi/2p$. Set 
\[
      w(t,x)= u(t,x)-u_0(t,x),
\]
where $u_0(t,x)$ is the solution obtained in Theorem \ref{Theorem4.1}.
We set $v_0(t,x)=x(\partial u_0/\partial x)(t,x)$.  By taking 
$\sigma_0$, $\theta_0$ and $R_0$ sufficiently small we may suppose 
that $u_0(t,x)$ and $v_0(t,x)$ are defined on 
$(0,\sigma_0) \times S(\theta_0,R_0)$ and satisfy 
$|u_0(t,x)| \leq M\mu(t)$ and
$|v_0(t,x)| \leq M\mu(t)$ on $(0,\sigma_0) \times S(\theta_0,R_0)$. 
Then, $w(t,x)$ satisfies 
\begin{equation}\label{4.7}
     \varlimsup_{\eta \to +0} \, \Bigl[ \, 
          \lim_{\sigma \to +0} \, \Bigl( \frac{1}{\eta^2}
       \sup_{(0,\sigma) \times S(\eta \theta_0, \eta R_0)}
          |w(t,x)| \Bigr) \, \Bigr] \, = \, 0 
\end{equation}
and a partial differential equation
\begin{align}
     t \,\frac{\partial w}{\partial t}
     = \lambda(t,x)w
           &+ (\beta(t,x)+x^pc(t,x))
         \Bigl( x \frac{\partial w}{\partial x} \Bigr)
        \label{4.8} \\
       &+a_1 \Bigl(t,x,w, x \frac{\partial w}{\partial x} \Bigr)w
       +b_1 \Bigl(t,x,w, x \frac{\partial w}{\partial x} \Bigr)
         \Bigl( x \frac{\partial w}{\partial x} \Bigr) \notag
\end{align}
where $a_1(t,x,w,q)$ and $b_1(t,x,w,q)$ are suitable functions 
satisfying
\begin{align*}
    &a_1(t,x,w,q)w + b_1(t,x,w,q)q \\
    &= R_2(t,x,w+u_0(t,x), q+v_0(t,x))
              - R_2(t,x,u_0(t,x),v_0(t,x)) \bigr).
\end{align*}
We may suppose that $a_1(t,x,w,q)$ and $b_1(t,x,w,q)$ belong to 
${\mathscr X}_0(\Omega_0)$ with $\Omega_0=[0,\sigma_0] \times
         S(\theta_0,R_0) \times D_{\rho_1} \times D_{\rho_1}$ 
for some $\rho_1>0$.  In addition, we have the properties:
\begin{align*}
    &|\beta(t,x)| \leq B \mu(t) \quad
             \mbox{on $(0,\sigma_0) \times D_{R_0}$}, \\
    &|a_1(t,x,w,q)| \leq A_0\mu(t)+A_1|w|+A_2|q|
        \quad \mbox{on $\Omega_0$}, \\
    &|b_1(t,x,w,q)| \leq B_0\mu(t)+B_1|w|+B_2|q|
        \quad \mbox{on $\Omega_0$}
\end{align*}
for some $B>0$, $A_i>0$ ($i=0,1,2$) and $B_i>0$ ($i=0,1,2$). Without 
loss of generality we may suppose
\[
     {\rm Re}\lambda(t,x) < -2a \quad
           \mbox{on $[0,\sigma_0] \times D_{R_0}$}
\]
for some $a>0$.  Recall that we have supposed $c(0,0)<0$. Thus, 
to prove Theorem \ref{Theorem4.2} it is sufficient to show the 
following result.

\begin{prop}\label{Proposition4.5}
    In the above situation, we have $w(t,x)=0$ on 
$(0,T_1) \times S(\theta_1,R_1)$ for some $T_1>0$, $\theta_1>0$ 
and $R_1>0$.
\end{prop}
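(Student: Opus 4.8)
The plan is to mimic the proof of Proposition \ref{Proposition2.5} almost verbatim, the only new feature being the irregular factor $x^p c(t,x)$ in the coefficient of $x\,\partial w/\partial x$. First I would set $q(t,x)=x(\partial w/\partial x)(t,x)$, and freeze the solution by putting $a(t,x)=a_1(t,x,w(t,x),q(t,x))$ and $b(t,x)=b_1(t,x,w(t,x),q(t,x))$; applying $x\,\partial/\partial x$ to (\ref{4.8}) gives, in parallel with (\ref{2.9}), a coupled linear system
\begin{align*}
 & t\frac{\partial w}{\partial t}-(\beta+x^p c+b)\Bigl(x\frac{\partial w}{\partial x}\Bigr)=(\lambda+a)w,\\
 & t\frac{\partial q}{\partial t}-(\beta+x^p c+b)\Bigl(x\frac{\partial q}{\partial x}\Bigr)=\gamma w+(\lambda+a+p(\beta+x^p c)+\ell)q,
\end{align*}
where $\gamma$ and $\ell$ are $\mathscr{X}_0$ functions built from $x\partial_x$ of the coefficients. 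Using (\ref{4.7}) together with Cauchy's formula on the sector to control $q/\eta=o(1)$, one gets that $A,L$ and $\sup|\beta|,\sup|b|$ are all $o(\eta)$ and $\sup|x^p c|$ is $O(\eta^p)$, $p\ge1$, so the relevant smallness conditions (the analogue of Lemma \ref{Lemma2.6}) hold after shrinking $\sigma,\eta$.

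Next, for fixed $(t_0,\xi)$ with $\xi\in S(\eta\theta_1,\eta R_1)$, I would integrate the characteristic ODE
\[
 t\frac{dx}{dt}=-(\beta(t,x)+x^p c(t,x)+b(t,x)),\qquad x(t_0)=\xi,
\]
and let $(t_\xi,t_0]$ be its maximal interval of existence. Along it, $w^*(t)=w(t,x(t))$ and $q^*(t)=q(t,x(t))$ solve ODEs of the same shape as (\ref{2.13})--(\ref{2.14}), except the $q^*$ equation carries the extra term $p(\beta+x^p c)(t,x(t))q^*$ in its coefficient. The key sign observation is that $c(0,0)<0$, so after shrinking we may assume $\mathrm{Re}\,c(t,x)<0$ on the relevant set, and for $x$ in a sector of half-angle $<\pi/2p$ we have $|\arg x^p|<\pi/2$, hence $\mathrm{Re}(x^p c(t,x))<0$ as well; combined with $\mathrm{Re}\,\lambda<-2a$ and the smallness of $A,L,\sup|\beta|$ this yields $\mathrm{Re}(\lambda+a)<-a$ and $\mathrm{Re}(\lambda+a+p(\beta+x^p c)+\ell)<-a$ along the characteristic. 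Therefore the exponential weights $\phi(t),\phi_1(t)$ are again bounded by $(t/\tau)^a$, giving exactly the estimates (\ref{2.15})--(\ref{2.16}) for $w^*,q^*$.

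From here the argument closes just as before: the bound on $|b_1|$ plus (\ref{2.15})--(\ref{2.16}) gives, in place of Lemma \ref{Lemma2.8}, an estimate $|x(t_1)|\le|\xi|+(\text{small})$ for all $t_1\in(t_\xi,t_0)$, where the smallness uses $\int(t_1/\tau)^a d\tau/\tau\le 1/a$, the same for the $\log$ term $\le1/a^2$, and $\beta=O(\mu)$; choosing $\xi$ in a sector half the size forces $x(t_1)$ to stay in a compact subsector of $S(\theta_0,R_0)$, contradicting maximality unless $t_\xi=0$ (the analogue of Corollary \ref{Corollary2.9}). Then (\ref{2.15}) with $\tau=t_0$ and $t_1\to+0$ gives $w^*(t_0)=0$, i.e. $w(t_0,\xi)=0$; since $t_0$ and $\xi$ range over $(0,T_1)\times S(\theta_1,R_1)$, we conclude $w\equiv0$ there. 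The one step that really needs care—and the only genuinely new point—is verifying that the characteristic ODE (which now has the singular factor $x^p c$) does not push $x(t)$ out of the sector or onto the boundary $|\arg x|=\theta_0$: one must check both that $d_S(x(t))$ stays positive and that the $o(\eta)$/$O(\eta^p)$ bounds on the coefficients, localized to the small sector $S(\eta\theta_0,\eta R_0)$, are strong enough to beat the $1/a$ and $1/a^2$ factors, exactly as the second inequality of Lemma \ref{Lemma2.6} is arranged. I expect this sector-confinement estimate to be the main obstacle; everything else is a routine transcription of Section 2.
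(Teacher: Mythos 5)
Your setup (freezing the coefficients, the coupled system for $w$ and $q=x\,\partial w/\partial x$, the $(t/\tau)^a$ decay of $w^*$ and $q^*$ along characteristics, and the final limiting argument in Step 6) matches the paper. But there is a genuine gap at exactly the point you flag as ``the main obstacle'': you propose to confine the characteristic $x(t)$ to the sector by an additive bound $|x(t_1)|\le|\xi|+(\text{small})$, treating $x^pc$ alongside $\beta$ and $b_1$ and hoping the $o(\eta)/O(\eta^p)$ smallness ``beats the $1/a$ and $1/a^2$ factors'' as in Lemma \ref{Lemma2.6}. This cannot work. The terms $\beta$ and $b_1$ are integrable against $d\tau/\tau$ (they are $O(\mu(\tau))$ or controlled by the decaying $w^*,q^*$), but $x^pc$ is merely \emph{bounded}, of fixed size $O(\eta^p)$ once the sector is chosen, so $\int_{t_1}^{t_0}|x^pc|\,d\tau/\tau$ grows like $\log(t_0/t_1)$ and a Gronwall-type estimate only yields $|x(t_1)|\lesssim|\xi|(t_0/t_1)^{C\eta^p}$, which does not keep $x(t)$ in a compact subset of $S(\theta,R)$, nor does it control $\arg x(t_1)$ or keep $|x(t_1)|$ away from $0$ (both of which are needed, since the sector is punctured at the origin and open at the boundary rays). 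Incidentally, your characteristic ODE is also miswritten: equation (\ref{4.1}) carries $x\,\partial u/\partial x$, so the correct ODE is $t\,dx/dt=-x\bigl(b(t,x)+x^pc(t,x)\bigr)$, with the extra factor of $x$; this is essential for what follows.

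The paper's resolution, which is the genuinely new content of Section 4 and is absent from your sketch, is to integrate this Bernoulli-type ODE \emph{explicitly}: after removing the integrable part $b$ by the integrating factor $\phi(t)=\exp[-\int_t^{t_0}b\,d\tau/\tau]$ (shown in Lemma \ref{Lemma4.9} to satisfy $1/2\le|\phi|\le 2$ with small argument), one finds the closed form
\[
 x(t_1)=\frac{\xi/\phi(t_1)}{\Bigl(1-p\,\xi^p\int_{t_1}^{t_0}\frac{c(\tau,x(\tau))}{\phi(\tau)^p}\,\frac{d\tau}{\tau}\Bigr)^{1/p}}.
\]
The sign condition $c(0,0)<0$, the restriction to sectors of aperture less than $\pi/2p$, and the smallness of $\arg(-c)$ and $\arg\phi$ then force $\mathrm{Re}\bigl(1-p\xi^p\int\cdots\bigr)\ge 1$, which simultaneously gives the upper bound $|x(t_1)|\le 2|\xi|$, an explicit lower bound $|x(t_1)|\ge (|\xi|/2)\bigl(1+p|\xi|^pC_02^p\log(t_0/t_1)\bigr)^{-1/p}$, and the angular bound $|\arg x(t_1)|\le 2|\arg\xi|+2\theta_\phi+\epsilon_1/p<\theta$ (Lemma \ref{Lemma4.10}); only with all three does $x(t_1)$ lie in a compact subset of $S(\theta,R)$ and the maximality contradiction of Lemma \ref{Lemma4.11} go through. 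Your use of $\mathrm{Re}(x^pc)<0$ for the coefficient of the $q^*$-equation is harmless (the paper simply makes $|x\partial_x(x^pc)|=O(\eta^p)$ small and absorbs it into $L$), but the argument-tracking computation above is the missing idea, not a routine transcription of Section 2.
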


   Before the proof, we note

\begin{lem}\label{Lemma4.6}
    If a holomorphic function $f(x)$ on $S(\theta,R)$ satisfies
\[
     \sup_{S(\eta \theta, \eta R)}|f(x)|
      = o(\eta^m) \quad 
       \mbox{{\rm (}as $\eta \longrightarrow +0${\rm )}}
\]
for some $m \geq 1$, we have
\[
     \sup_{S(\eta \theta, \eta R)} 
           |x(d/dx)f(x)|
      = o(\eta^{m-1}) \quad 
         \mbox{{\rm (}as $\eta \longrightarrow +0${\rm )}}.
\]
\end{lem}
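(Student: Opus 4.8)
The plan is to deduce Lemma~\ref{Lemma4.6} from the Cauchy integral formula, the only real point being to choose the radius of the Cauchy disc in accordance with the geometry of the scaled sectors $S(\eta\theta,\eta R)$: at a point $x$ of such a sector the angular thickness is of order $|x|\,\eta\theta$, so the disc centered at $x$ on which one estimates $f$ should have radius comparable to $|x|\,\eta\theta$, not merely to $|x|$. It is cleanest to pass to the logarithmic variable $\xi=\log x$: writing $f(x)=g(\xi)$ one has $x\,(d/dx)f(x)=(d/d\xi)g(\xi)$, the sector $S(\theta,R)$ becomes the half-strip $\Pi=\{\xi\in\BC\,;\,\mathrm{Re}\,\xi<\log R,\ |\mathrm{Im}\,\xi|<\theta\}$ (on which $g$ is holomorphic, since $|\mathrm{Im}\,\xi|<\theta<\pi/2$ prevents any wraparound), and $S(\eta\theta,\eta R)$ becomes $\Pi_\eta=\{\xi\in\BC\,;\,\mathrm{Re}\,\xi<\log(\eta R),\ |\mathrm{Im}\,\xi|<\eta\theta\}$, on which $\sup_{\Pi_\eta}|g|=o(\eta^m)$ is the hypothesis.

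First I would fix a sufficiently small $\eta>0$ and a point $\xi\in\Pi_\eta$, and observe that the closed disc of center $\xi$ and radius $\eta\theta$ is contained in $\Pi_{2\eta}\subset\Pi$: its points stay to the left of the line $\mathrm{Re}=\log(2\eta R)$ because $\log(2\eta R)-\log(\eta R)=\log 2>\eta\theta$ for small $\eta$, and they stay within $|\mathrm{Im}|<2\eta\theta$ because their imaginary parts differ from $\mathrm{Im}\,\xi<\eta\theta$ by at most $\eta\theta$. Then Cauchy's inequality for the first derivative on this disc gives
\[
  |g'(\xi)|\le\frac{1}{\eta\theta}\,\sup_{|\zeta-\xi|\le\eta\theta}|g(\zeta)|
   \le\frac{1}{\eta\theta}\,\sup_{\Pi_{2\eta}}|g|.
\]
Taking the supremum over $\xi\in\Pi_\eta$ and applying the hypothesis with $2\eta$ in place of $\eta$ yields
\[
  \sup_{\Pi_\eta}|g'|\le\frac{1}{\eta\theta}\,\sup_{\Pi_{2\eta}}|g|
   =\frac{1}{\eta\theta}\,o\bigl((2\eta)^m\bigr)=o(\eta^{m-1})
   \qquad(\eta\longrightarrow+0),
\]
and translating back through $\xi=\log x$ this is precisely $\sup_{S(\eta\theta,\eta R)}|x(d/dx)f(x)|=o(\eta^{m-1})$, as claimed.

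If one prefers to remain in the $x$-plane, the same estimate is obtained with the disc $D(x,\rho)$, $\rho=\tfrac12|x|\,\eta\theta$, which lies in $S(2\eta\theta,2\eta R)$: on it $|x|<\eta R$ forces $|z|\le|x|(1+\tfrac12\eta\theta)<2\eta R$ and $|z|\ge|x|(1-\tfrac12\eta\theta)>0$, while $|\arg z-\arg x|\le\arcsin(\tfrac12\eta\theta)<\eta\theta$, so $|\arg z|<2\eta\theta$; Cauchy's inequality then gives $|x\,f'(x)|\le(2/\eta\theta)\sup_{S(2\eta\theta,2\eta R)}|f|$ and one concludes as before. There is no genuine obstacle here; the only thing to get right is the interplay between the radius of the Cauchy disc and the opening of the sector --- picking a radius that is both large enough to produce the factor $\eta^{-1}$ and small enough that the disc stays inside a slightly larger sector to which the hypothesis still applies.
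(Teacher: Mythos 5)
Your proof is correct and follows essentially the same route as the paper: the paper applies a Nagumo-type lemma in a sectorial domain (cited from Bacani--Tahara) to pass from the bound on $f$ over $S(\eta\theta,\eta R)$ to a bound on $x(d/dx)f$ over $S((\eta/2)\theta,(\eta/2)R)$, which is exactly your Cauchy estimate in the logarithmic variable with the roles of the two nested sectors relabelled ($\eta\mapsto 2\eta$ instead of $\eta\mapsto\eta/2$). The only difference is that you inline the proof of that Nagumo-type inequality rather than citing it, which makes the argument self-contained but not substantively different.
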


\begin{proof}
   By the assumption, for any $\epsilon>0$ there
is an $\eta_0 \in (0,1)$ such that 
\[
       |f(x)| \leq \epsilon \eta^m \quad
       \mbox{on $S(\eta \theta, \eta R)$}, 
            \quad 0<\eta<\eta_0.
\]
Take any $0<\eta<\eta_0$ and fix it. 
Set $d(x)= \min \{\eta \theta-|\arg x|, \log (\eta R)-\log|x| \}$
for $x \in S(\eta \theta, \eta R)$. Then, by Nagumo's lemma in 
a sectorial domain (see [Lemma 4.2 in \cite{BT2}]) we have
\[
       |x (d/dx)f(x)|
        \leq \frac{\epsilon \eta^m}{d(x)} \quad
       \mbox{on $S(\eta \theta, \eta R)$}.
\]
If $x \in S((\eta/2) \theta, (\eta/2) R)$ we have 
\begin{align*}
   &\eta \theta-|\arg x|> \eta \theta-(\eta/2)\theta
          =(\eta/2) \theta
            \geq \min\{(\eta/2) \theta, \log 2 \}, \\
   &\log (\eta R)-\log|x| \geq \log(\eta R)-\log((\eta/2)R)
         = \log 2
            \geq \min\{(\eta/2) \theta, \log 2 \}
\end{align*}
and so $d(x) \geq \min\{(\eta/2) \theta, \log 2 \}$.
If $\eta>0$ is sufficiently small we have
$d(x) \geq (\eta/2) \theta$, and so
\[
        |x (d/dx)f(x)|
        \leq \frac{\epsilon \eta^m}
              {(\eta/2)\theta} = \frac{2^m \epsilon}{\theta}
        (\eta/2)^{m-1}
       \quad
       \mbox{on $S((\eta/2) \theta, (\eta/2) R)$}.
\]
This proves the result in Lemma \ref{Lemma4.6}.
\end{proof}

\begin{proof}[Proof of Proposition 4.5]
 Let us prove Proposition \ref{Proposition4.5} step by step.

\par
\medskip
   {\bf Step 1.} We set 
$q(t,x)= x(\partial w/\partial x)(t,x)$, and
\begin{align*}
    &a(t,x)=a_1(t,x,w(t,x),q(t,x)), \\
    &b(t,x)=\beta(t,x)+ b_1(t,x,w(t,x),q(t,x)):
\end{align*}
we may suppose that these functions belong to
${\mathscr X}_0((0,\sigma_0)\times S(\theta_0,R_0))$. By 
(\ref{4.8}) we have the relation
\begin{equation}\label{4.9}
     t \frac{\partial w}{\partial t}
    - x(b(t,x)+x^pc(t,x)) \frac{\partial w}{\partial x}
     = (\lambda(t,x)+a(t,x))w.
\end{equation}
By applying $x(\partial/\partial x)$ to (\ref{4.9}) we have
\begin{align}
    &t \frac{\partial q}{\partial t}
        - x(b(t,x)+x^pc(t,x)) \frac{\partial q}{\partial x} 
               \label{4.10} \\
    &= \gamma(t,x)w + (\lambda(t,x)+a(t,x)
           + \ell(t,x))q, \notag
\end{align}
where 
\begin{align*}
    &\gamma(t,x)= x(\partial \lambda/\partial x)(t,x)
         + x(\partial a/\partial x)(t,x), \\
    &\ell(t,x)= x(\partial b/\partial x)(t,x)
             + x (\partial (x^pc)/\partial x)(t,x):
\end{align*}
these are also functions belonging to
${\mathscr X}_0((0,\sigma_0) \times S(\theta_0,R_0))$.
For $0<\sigma_1<\sigma_0$ and $0<\eta<1$ we set
\begin{align*}
   &A = \sup_{(0,\sigma_1) \times 
                   S(\eta \theta_0, \eta R_0)}|a(t,x)|, \\
   &\Gamma =  \sup_{(0,\sigma_1) \times 
                  S(\eta \theta_0, \eta R_0)} |\gamma(t,x)|, \\
   &L =  \sup_{(0,\sigma_1) \times 
                     S(\eta \theta_0, \eta R_0)} |\ell(t,x)|.
\end{align*}
We set also
\[
    r_1= \sup_{(0,\sigma_1) \times 
                      S(\eta \theta_0, \eta R_0)}|w(t,x)|,\quad
    r_2= \sup_{(0,\sigma_1) \times 
                      S(\eta \theta_0, \eta R_0)}|q(t,x)|.
\]
By (\ref{4.7}) and by the same arument as in the proof of Lemma 
\ref{Lemma2.6} we have

\begin{lem}\label{Lemma4.7}
    By taking $\sigma_1>0$ and $\eta>0$ sufficiently small we have 
the following conditions: $A+L<a$,  
\[
   \delta=(B+B_0) \varphi(\sigma_1) + \Bigl( \frac{B_1}{a} 
          + \frac{B_2\Gamma}{a^2} \Bigr)r_1
         + \frac{B_2}{a}r_2 < \log 2,
\]
and $0<\sin^{-1}(2\delta) < \min \{\eta \theta_0/12, \pi/6p \}$.
\end{lem}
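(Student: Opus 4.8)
The plan is to follow the scheme of the proof of Lemma~\ref{Lemma2.6}, but to track carefully the powers of the scaling parameter $\eta$. The essential new feature, absent in the disc case, is that the third requirement couples $\delta$ to $\eta$ through a bound $\sin^{-1}(2\delta)<\eta\theta_0/12$ whose right-hand side itself tends to $0$ with $\eta$. So it will not suffice to make $\delta$ merely small; I must propagate the $\eta^2$-normalization in \eqref{4.7} so that $\delta$ comes out to be $o(\eta)$, which then beats the $O(\eta)$ threshold.

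First I would record the consequence of \eqref{4.7}: for every $\epsilon>0$ there is an $\eta_1>0$ so that, for $0<\eta<\eta_1$,
\[
    \lim_{\sigma\to+0}\,\sup_{(0,\sigma)\times S(\eta\theta_0,\eta R_0)}|w(t,x)|\le\epsilon\,\eta^2,
\]
i.e. this limit is $o(\eta^2)$ as $\eta\to+0$. Applying the Nagumo-type estimate underlying Lemma~\ref{Lemma4.6} (whose constant is independent of $t$) slicewise to the holomorphic functions $w(t,\cdot)$, and noting that a $t$-independent bound survives both the supremum over $t\in(0,\sigma)$ and the passage $\sigma\to+0$, I obtain that the corresponding limit of $r_2=\sup|q|$ is $o(\eta)$ (the harmless half-sector relabeling $\eta\mapsto\eta/2$ produced by that estimate does not affect the asymptotics). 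Thus, in the limit $\sigma_1\to+0$, one has $r_1=o(\eta^2)$ and $r_2=o(\eta)$.

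Next I would estimate the coefficients. Since $\mu(t)\le\mu(\sigma_1)\to0$, the bounds $|a_1|\le A_0\mu+A_1|w|+A_2|q|$ and $|\beta|+|b_1|\le(B+B_0)\mu+B_1|w|+B_2|q|$ give, in the limit $\sigma_1\to+0$, that $A=\sup|a|$ and $\sup|b|$ are dominated by $A_1r_1+A_2r_2$ and $B_1r_1+B_2r_2$ respectively, hence both are $o(\eta)$. Feeding $\sup|a|=o(\eta)$ and $\sup|b|=o(\eta)$ back into Lemma~\ref{Lemma4.6} with $m=1$ bounds $x\partial_xa$ and $x\partial_xb$ by $o(1)$; combined with $|x\partial_x\lambda|\le\eta R_0\sup|\partial_x\lambda|=O(\eta)$ and $|x\partial_x(x^pc)|\le C|x|^p=O(\eta^p)$ (recall $p\ge1$), this shows $\Gamma=\sup|\gamma|$ stays bounded (in fact $\to0$) and $L=\sup|\ell|\to0$ as $\eta\to+0$. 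In particular $A+L<a$ once $\eta$ is small.

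Finally I would assemble $\delta$. From the above, in the limit $\sigma_1\to+0$ the non-$\varphi$ part of $\delta$ equals $(B_1/a+B_2\Gamma/a^2)r_1+(B_2/a)r_2$; since $\Gamma$ is bounded, $r_1=o(\eta^2)$ and $r_2=o(\eta)$, this combination is $o(\eta)$, while for each fixed $\eta$ the term $(B+B_0)\varphi(\sigma_1)\to0$ as $\sigma_1\to+0$. I therefore first fix $\eta>0$ so small that the $o(\eta)$-part of $\delta$ is below a fixed fraction of $\sin(\eta\theta_0/12)$ and below a fixed fraction of $\sin(\pi/6p)$ (possible because $\sin(\eta\theta_0/12)\ge c\,\eta$ for small $\eta$, a genuine $O(\eta)$ lower bound, whereas the $o(\eta)$-part is $o(\eta)$), and so that $A+L<a$; then, with $\eta$ fixed, I choose $\sigma_1>0$ so small that $(B+B_0)\varphi(\sigma_1)$ and the differences between $A,L,\Gamma,r_1,r_2$ and their $\sigma_1\to+0$ limits are negligible. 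This yields $2\delta<\sin(\eta\theta_0/12)$ and $2\delta<\sin(\pi/6p)$, i.e. $\sin^{-1}(2\delta)<\min\{\eta\theta_0/12,\pi/6p\}$, and a fortiori $\delta<\log2$ (since $\sin(\pi/6p)\le\sin(\pi/6)=1/2$ forces $\delta<1/4<\log2$). The main obstacle is exactly this last coupling: because the admissible bound on $\sin^{-1}(2\delta)$ shrinks with $\eta$, the argument closes only because the $\eta^2$-normalization in \eqref{4.7}, carried through the sectorial Cauchy estimate of Lemma~\ref{Lemma4.6}, forces $\delta=o(\eta)$.
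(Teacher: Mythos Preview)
Your proof is correct and follows the same approach that the paper indicates, namely the argument of Lemma~\ref{Lemma2.6} transported to the sectorial setting via Lemma~\ref{Lemma4.6} and the hypothesis~\eqref{4.7}. In fact you have made explicit the one point the paper's one-line proof leaves to the reader: the $\eta^2$-normalization in~\eqref{4.7} propagates through the sectorial Cauchy/Nagumo estimate to give $\delta=o(\eta)$, which is exactly what is needed to beat the $\eta$-dependent threshold $\sin^{-1}(2\delta)<\eta\theta_0/12$ in the third condition.
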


\par
\medskip
   {\bf Step 2.} We take $\sigma_1>0$ and $\eta>0$ as in 
Lemma \ref{Lemma4.7}, and fix them. After that, we take 
$0<\sigma <\sigma_1$ and $0<R <\eta R_0$ sufficiently small so that
\begin{equation}\label{4.11}
    \epsilon_1= \sup_{(0,\sigma) \times S(\eta \theta_0, R)}
        |\arg(-c(t,x))|< \min \{p (\eta \theta_0)/6, \pi/6 \}.
\end{equation}
Since $\arg(-c(0,0))=0$ holds, this is possible.
\par
   We take such $\sigma>0$ and $R>0$ and fix them. Set 
$\theta=\eta \theta_0$. Then, we have 
$\epsilon_1/p < \min \{\theta/6, \pi/6p \}$.

\par
\medskip
   {\bf Step 3.} Take any $t_0 \in (0,\sigma)$ and 
$\xi \in S(\theta,R)$; for a while we fix them.
\par
   Let us consider the initial value problem
\begin{equation}\label{4.12}
   t \,\frac{dx}{dt} = -x(b(t,x)+x^pc(t,x)), 
     \quad x(t_0)=\xi. 
\end{equation}
Here, we regard $b(t,x)$ and $c(t,x)$ as functions in
${\mathscr X}_0((0,\sigma) \times S(\theta,R))$.  Let $x(t)$ 
be the unique solution in a neighborhood of $t=t_0$. 
Let $(t_{\xi},t_0]$ be the maximal interval of the existence of 
this solution.  Set
\[
      w^*(t)=w(t,x(t)), \quad q^*(t)=q(t,x(t)).
\]

\begin{lem}\label{Lemma4.8}
   {\rm (1)} We have $x(t) \ne 0$ on $(t_{\xi},t_0]$.
\par
   {\rm (2)} For any $(t_1,\tau)$ satisfying $t_\xi<t_1<\tau \leq t_0$
we have
\begin{align}
    &|w^*(\tau)| \leq \Bigl( \frac{t_1}{\tau} \Bigr)^a|w^*(t_1)|, 
               \label{4.13}\\
    &|q^*(\tau)| \leq \Bigl( \frac{t_1}{\tau} \Bigr)^a
       \bigl( \Gamma|w^*(t_1)| \log(\tau/t_1) + |q^*(t_1)| \bigr).
            \label{4.14}
\end{align}
\par
   {\rm (3)} For any $t_1 \in (t_\xi,t_0]$ we have
\[
    \Bigl| \int_{t_1}^{t_0} b(\tau,x(\tau)) \frac{d\tau}{\tau}
        \Bigr| \leq \delta   
\]
where $\delta$ is the one in Lemma \ref{Lemma4.7}.
\end{lem}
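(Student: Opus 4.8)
The plan is to prove the three assertions of Lemma \ref{Lemma4.8} in turn, leaning on the estimates already at our disposal in Lemmas \ref{Lemma2.7}, \ref{Lemma2.8} and \ref{Lemma4.7}; assertions (2) and (3) will be sectorial transcriptions of the Case~1 computations, so the only point needing a separate remark is assertion (1). For (1), I would observe that wherever $x(t) \neq 0$ the equation (\ref{4.12}) reads $t\,(d/dt)\log x(t) = -\bigl(b(t,x(t)) + x(t)^{p}c(t,x(t))\bigr)$, whence $x(t) = \xi\,\exp\bigl[-\int_{t_0}^{t}\bigl(b(s,x(s)) + x(s)^{p}c(s,x(s))\bigr)\,ds/s\bigr]$. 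For every $t_1 \in (t_\xi,t_0]$ the integrand is continuous on the compact interval $[t_1,t_0]$, since there $x(s) \in S(\theta,R) \subset S(\theta_0,R_0)$, the coefficients $b,c$ are defined and continuous on that set and $|x(s)| \leq R$; hence the exponent is finite and $x(t_1) \neq 0$. Equivalently, the characteristic cannot leave the punctured domain $S(\theta,R)$ through the origin.

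For (2), I would restrict (\ref{4.9}) and (\ref{4.10}) to the curve $x = x(t)$. Granted (1), the quantities $w^{*}(t) = w(t,x(t))$ and $q^{*}(t) = q(t,x(t)) = x(t)\,(\partial w/\partial x)(t,x(t))$ are well defined on $(t_\xi,t_0]$; using $t\,dx/dt = -x(b + x^{p}c)$ the left-hand sides of (\ref{4.9})--(\ref{4.10}) collapse and one gets $t\,dw^{*}/dt = (\lambda(t,x(t)) + a(t,x(t)))\,w^{*}$ and $t\,dq^{*}/dt = \gamma(t,x(t))\,w^{*} + (\lambda(t,x(t)) + a(t,x(t)) + \ell(t,x(t)))\,q^{*}$ on $(t_\xi,t_0]$. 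These are exactly of the form (\ref{2.13})--(\ref{2.14}). Since ${\rm Re}\,\lambda < -2a$ on $[0,\sigma_0] \times D_{R_0}$ and $A + L < a$ by Lemma \ref{Lemma4.7}, we have ${\rm Re}(\lambda + a) < -a$ and ${\rm Re}(\lambda + a + \ell) < -a$ along the curve, so the integrating-factor argument of Lemma \ref{Lemma2.7} carries over verbatim and yields (\ref{4.13}) and (\ref{4.14}).

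For (3), I would bound $|b(\tau,x(\tau))| = |\beta(\tau,x(\tau)) + b_1(\tau,x(\tau),w^{*}(\tau),q^{*}(\tau))| \leq (B + B_0)\mu(\tau) + B_1|w^{*}(\tau)| + B_2|q^{*}(\tau)|$, substitute the bounds (\ref{4.13})--(\ref{4.14}) for $|w^{*}(\tau)|$ and $|q^{*}(\tau)|$ in terms of $|w^{*}(t_1)| \leq r_1$ and $|q^{*}(t_1)| \leq r_2$ (legitimate because $(t_1,x(t_1)) \in (0,\sigma_1) \times S(\eta\theta_0,\eta R_0)$), and integrate from $t_1$ to $t_0$ using $\int_{t_1}^{t_0}(t_1/\tau)^{a}\,d\tau/\tau \leq 1/a$ and $\int_{t_1}^{t_0}(t_1/\tau)^{a}\log(\tau/t_1)\,d\tau/\tau \leq 1/a^{2}$ exactly as in Lemma \ref{Lemma2.8}, together with $\int_{t_1}^{t_0}\mu(\tau)\,d\tau/\tau \leq \varphi(\sigma_1)$. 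This produces $\bigl|\int_{t_1}^{t_0}b(\tau,x(\tau))\,d\tau/\tau\bigr| \leq (B + B_0)\varphi(\sigma_1) + (B_1/a + B_2\Gamma/a^{2})\,r_1 + (B_2/a)\,r_2 = \delta$, as claimed.

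None of these steps is individually hard; parts (1)--(3) are the natural sectorial analogues of the Case~1 lemmas. The real difficulty of the Case~3 argument lies just after Lemma \ref{Lemma4.8}, in the step playing the role of Corollary \ref{Corollary2.9}: one must still show $t_\xi = 0$ and that $x(t)$ never exits $S(\theta_0,R_0)$, and there the hypothesis $c(0,0) < 0$ together with $|\arg x| < \theta < \pi/2p$ is used to keep ${\rm Re}\bigl(x(t)^{p}c(t,x(t))\bigr) \leq 0$, while the bound $\delta$ just obtained --- together with the angular smallness $\sin^{-1}(2\delta) < \min\{\eta\theta_0/12,\pi/6p\}$ of Lemma \ref{Lemma4.7} and the bound $\epsilon_1$ of (\ref{4.11}) --- controls the drift of $\arg x(t)$ and of $\log|x(t)|$. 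Lemma \ref{Lemma4.8} is precisely the package of estimates that this step consumes.
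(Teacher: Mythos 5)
Your treatment of parts (2) and (3) is essentially identical to the paper's: the paper likewise restricts (\ref{4.9})--(\ref{4.10}) to the characteristic curve, invokes the integrating-factor computation of Lemma \ref{Lemma2.7} (using ${\rm Re}\lambda<-2a$ and $A+L<a$ from Lemma \ref{Lemma4.7}) to obtain (\ref{4.13})--(\ref{4.14}), and then integrates the bound $|b(\tau,x(\tau))|\le (B+B_0)\mu(\tau)+B_1|w^*(\tau)|+B_2|q^*(\tau)|$ exactly as in Lemma \ref{Lemma2.8} before comparing with the $\delta$ of Lemma \ref{Lemma4.7}; nothing to add there. The one divergence is part (1). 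The paper argues by uniqueness for the ODE: if $x(t_1)=0$ for some $t_1$, then $x\equiv 0$ solves the same initial value problem at $t_1$, hence $\xi=x(t_0)=0$, contradicting $\xi\in S(\theta,R)$. You instead write $x(t)=\xi\exp\bigl[-\int_{t_0}^{t}(b+x^pc)\,ds/s\bigr]$ and conclude that the exponential never vanishes; but as phrased this is circular, because you justify boundedness of the integrand on $[t_1,t_0]$ by asserting $x(s)\in S(\theta,R)$ there, and membership in the punctured sector already contains the claim $x(s)\ne 0$. To make your route airtight you should either observe directly that the characteristic ODE is posed with $b,c$ regarded as functions on $(0,\sigma)\times S(\theta,R)$, so that any solution takes values in the punctured sector by the very definition of solution (which renders (1) immediate and the exponential formula superfluous), or run a first-zero argument (let $t_*$ be the supremum of the zeros of $x$ and work on $(t_*,t_0]$) --- in which case you must still control $b(s,x(s))$ as $s\downarrow t_*$, and that is not automatic since $b$ involves $w$ and $q$, which are defined only on the sector and need not extend to $x=0$. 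The paper's uniqueness argument is the cleaner way around this point; otherwise your proposal matches the intended proof.
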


\begin{proof}
    If $x(t_1)=0$ holds for 
some $t_1 \in (t_{\xi},t_0]$, $x(t)$ is a solution of
\[
    t \frac{dx}{dt}= -x (b(t,x)+x^pc(t,x)),
    \quad x(t_1)=0.
\]
Since $x \equiv 0$ is also a solution of this initial value 
problem, by the uniqueness of the solution we have
$x(t) \equiv 0$ and so $\xi=x(t_0)=0$. This contradicts the
condition $\xi \in S(\theta,R)$ (this means $\xi \ne 0$).
This proves (1).
\par
   By applying the same argument as in the proof of 
Lemma \ref{Lemma2.7} to (\ref{4.9}) and (\ref{4.10}) we have the 
estimates in (2).  By using (\ref{4.13}) and (\ref{4.14}) we can show
\begin{align*}
    &\Bigl| \int_{t_1}^{t_0} b(\tau,x(\tau)) \frac{d\tau}{\tau}
        \Bigr|  \\
    &\leq (B+B_0) (\varphi(t_0)-\varphi(t_1))
       	+\Bigl(\frac{B_1}{a} + \frac{B_2\Gamma}{a^2} \Bigr)r_1
         + \frac{B_2}{a} r_2 
\end{align*}
in the same way as in the proof of Lemma \ref{Lemma2.8}.  Therefore, 
by combining this with Lemma \ref{Lemma4.7} we have the result (3). 
\end{proof}

\begin{lem}\label{Lemma4.9}
    We set
\[
      \phi(t)= \exp \Bigl[ - \int_t^{t_0} b(\tau,x(\tau)) 
         \frac{d\tau}{\tau} \Bigr], \quad t_{\xi}<t<t_0.
\]
Then, we have $1/2 \leq |\phi(t)| \leq 2$ on $(t_{\xi},t_0]$
and
\begin{equation}\label{4.15}
    \theta_{\phi} = \sup_{(t_{\xi},t_0]} |\arg \phi(t)|
        < \min\{\theta/12, \pi/6p \}. 
\end{equation}
\end{lem}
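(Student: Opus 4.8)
The plan is to read both assertions off the single bound established in Lemma \ref{Lemma4.8}(3). Set $I(t)=\int_t^{t_0}b(\tau,x(\tau))\,\frac{d\tau}{\tau}$, so that $\phi(t)=e^{-I(t)}$ on $(t_\xi,t_0]$. Applying Lemma \ref{Lemma4.8}(3) with $t_1=t$ gives $|I(t)|\leq\delta$ for every $t\in(t_\xi,t_0]$, where $\delta$ is the quantity from Lemma \ref{Lemma4.7}; this one estimate will yield everything.

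For the modulus I would split $I(t)$ into real and imaginary parts and note $|\phi(t)|=\exp(-{\rm Re}\,I(t))$. Since $|{\rm Re}\,I(t)|\leq|I(t)|\leq\delta$, we get $e^{-\delta}\leq|\phi(t)|\leq e^{\delta}$, and because $\delta<\log 2$ by Lemma \ref{Lemma4.7} this gives $1/2<|\phi(t)|<2$, hence $1/2\leq|\phi(t)|\leq 2$ on $(t_\xi,t_0]$.

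For the argument, observe that $|{\rm Im}\,I(t)|\leq|I(t)|\leq\delta<\log 2<\pi$, so $-I(t)$ lies in the horizontal strip $\{z\in\BC\,;\,|{\rm Im}\,z|<\pi\}$, on which the exponential is injective and satisfies $\arg e^{z}={\rm Im}\,z$. Hence $\arg\phi(t)=-{\rm Im}\,I(t)$ and $|\arg\phi(t)|\leq\delta$ for all $t\in(t_\xi,t_0]$. Recalling that $\theta=\eta\theta_0$ was fixed in Step 2 and that $y\leq\sin^{-1}y$ for $y\in[0,1]$, Lemma \ref{Lemma4.7} gives $\delta\leq 2\delta\leq\sin^{-1}(2\delta)<\min\{\theta/12,\pi/6p\}$, so $\theta_\phi=\sup_{(t_\xi,t_0]}|\arg\phi(t)|\leq\delta<\min\{\theta/12,\pi/6p\}$, which is (\ref{4.15}).

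There is essentially no genuine obstacle. The only point needing a moment's care is the branch issue in the argument computation — making sure the crude smallness $\delta<\log 2<\pi$ inherited from the modulus estimate suffices to identify $\arg\phi(t)$ with $-{\rm Im}\,I(t)$ with no ambiguity of a multiple of $2\pi$ — together with the chain $\delta\leq\sin^{-1}(2\delta)$ that transfers the smallness hypothesis of Lemma \ref{Lemma4.7} to $\delta$ itself; everything else is a direct substitution of Lemma \ref{Lemma4.8}(3) into the definition of $\phi$.
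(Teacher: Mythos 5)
Your proof is correct, and the modulus estimate is exactly the paper's: both reduce to Lemma \ref{Lemma4.8}(3), which gives $|I(t)|\leq\delta<\log 2$, and then exponentiate. For the argument bound, however, you take a slightly different route. The paper never identifies $\arg\phi(t)$ with $-\mathrm{Im}\,I(t)$; instead it estimates $|\phi(t)-1|\leq\delta e^{\delta}<2\delta$ (using $\delta<\log 2$ again), places $\phi(t)$ in the disk $\{|z-1|<2\delta\}$, and reads off $\sin|\arg\phi(t)|<2\delta$ geometrically, so that $\theta_\phi\leq\sin^{-1}(2\delta)$, which is precisely the quantity controlled in Lemma \ref{Lemma4.7} --- no further conversion needed. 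You instead get the sharper bound $\theta_\phi\leq\delta$ directly from $\arg e^{-I}=-\mathrm{Im}\,I$ (legitimate, since $|\mathrm{Im}\,I(t)|\leq\delta<\pi$ rules out any $2\pi$-ambiguity of the principal argument, as you note), and then you need the extra link $\delta\leq 2\delta\leq\sin^{-1}(2\delta)$ to plug into Lemma \ref{Lemma4.7}; that link is valid because Lemma \ref{Lemma4.7} implicitly forces $2\delta\leq 1$ (indeed $2\delta=\sin(\sin^{-1}(2\delta))<\sin(\pi/6p)\leq 1/2$). So your argument is a touch cleaner analytically and yields a marginally better constant, while the paper's disk argument avoids any discussion of branches of the logarithm and dovetails syntactically with the exact form of the hypothesis in Lemma \ref{Lemma4.7}. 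Both are complete proofs.
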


\begin{proof}
   By (3) of Lemma \ref{Lemma4.8} and the condition
$\delta<\log 2$ (by Lemma \ref{Lemma4.7}) we have 
$|\phi(t)| \leq e^{\delta} < e^{\log 2}=2$.
Similarly, we have $1/|\phi(t)| \leq e^{\delta} \leq 2$. This 
proves the first part. Since
\[
   |\phi(t)-1| \leq \sum_{m \geq 1} \frac{1}{m!}
         \Bigl| \int_t^{t_0} b(\tau,x(\tau)) \frac{d\tau}{\tau}
        \Bigr|^m
    \leq \sum_{m \geq 1}\frac{\delta^m}{m!} 
    \leq \delta \sum_{m \geq 0}\frac{\delta^m}{m!}
       = \delta e^{\delta} < 2\delta
\]
we have $\phi(t) \in \{z \in \BC \,;\, |z-1|<2\delta \}$:
this yields $\sin |\arg \phi(t)| <2 \delta$. Hence, we have 
$\sin \theta_{\phi} \leq 2\delta$, that is, 
$\theta_{\phi} \leq \sin^{-1}(2\delta)$. By Lemma \ref{Lemma4.7} 
and $\theta=\eta \theta_0$ (in Step 2) we have
     $\theta_{\phi}<\min\{\theta/12, \pi/6p \}$.
This proves (\ref{4.15}). 
\end{proof}

\par
\medskip
   {\bf Step 4.} Let $t_{\xi}<t_1 <t_0$. By (\ref{4.12}) we have
\[
    t \frac{d}{dt}(\phi(t)x(t))
             = -(\phi(t)x(t))^{p+1} \frac{c(t,x(t))}{\phi(t)^p}.
\]
Since $x(t) \ne 0$ on $(t_{\xi},t_0]$, we have
\[
    \frac{d}{dt} \Bigl( \frac{-1/p}{(\phi(t)x(t))^p} \Bigr)
    = - \frac{c(t,x(t))}{\phi(t)^p} \times \frac{1}{t}
\]
and so by integrating this from $t_1$ to $t_0$ we have
\[
   \frac{-1/p}{(\phi(t_0)x(t_0))^p}
                   - \frac{-1/p}{(\phi(t_1)x(t_1))^p}
      = - \int_{t_1}^{t_0} \frac{c(\tau,x(\tau))}{\phi(\tau)^p}
         \frac{d \tau}{\tau},
\]
that is, 
\[
   \frac{1}{(\phi(t_1)x(t_1))^p}
      = \frac{1}{\xi^p}- p\int_{t_1}^{t_0}
               \frac{c(\tau,x(\tau))}{\phi(\tau)^p}
         \frac{d \tau}{\tau}.
\]
Hence, by solving $x(t_1)$ we have the expression:
\begin{equation}\label{4.16}
    x(t_1)
      = \dfrac{\xi/\phi(t_1)}{\displaystyle 
           \Bigl(1- p \xi^p\int_{t_1}^{t_0} 
           \frac{c(\tau,x(\tau))}{\phi(\tau)^p}
         \frac{d \tau}{\tau} \Bigr)^{1/p}}, \quad
     t_{\xi}<t_1 \leq t_0.
\end{equation}

\begin{lem}\label{Lemma4.10}
    We have the following properties.
\par
   {\rm (1)} For any $t_1 \in (t_{\xi},t_0]$ we have 
$$
      |\xi/\phi(t_1)| \leq 2|\xi| \quad \mbox{and}
    \quad |\arg(\xi/\phi(t_1))| \leq |\arg \xi|+\theta_{\phi}.
$$
\par
   {\rm (2)} If $p|\arg \xi| + \epsilon_1 + p\theta_{\phi} \leq \pi/2$, 
we have
$$
    \biggl|\arg \biggl(- p \xi^p\int_{t_1}^{t_0} 
           \frac{c(\tau,x(\tau))}{\phi(\tau)^p}
         \frac{d \tau}{\tau} \biggr) \biggr|
     \leq p |\arg \xi| + \epsilon_1 + p\theta_{\phi}.
$$
\par
   {\rm (3)} If $p|\arg \xi| + \epsilon_1 + p\theta_{\phi} \leq \pi/2$,
for any $t_1 \in (t_{\xi},t_0]$ we have 
\begin{align}
   &\frac{|\xi|/2}
       {\Bigl(1+p|\xi|^p C_0 2^p \log(t_0/t_1) \Bigr)^{1/p}}
          \leq |x(t_1)| \leq 2|\xi|, \label{4.17}\\
   &|\arg x(t_1)| \leq 2|\arg \xi| + 2\theta_{\phi} +\epsilon_1/p,
          \label{4.18}
\end{align}
where $C_0$ is a constant satisfying $|c(t,x)| \leq C_0$ on 
$(0,\sigma) \times S(\theta,R)$.
\end{lem}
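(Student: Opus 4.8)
The plan is to read all three estimates directly off the closed-form expression (\ref{4.16}), using only the bounds $1/2\le|\phi(t)|\le 2$ and $\theta_\phi<\min\{\theta/12,\pi/6p\}$ of Lemma \ref{Lemma4.9}, the bound $\epsilon_1<\min\{p\theta/6,\pi/6\}$ of (\ref{4.11}), the crude bound $|c(t,x)|\le C_0$, and the fact that $(t_\xi,t_0]$ being the maximal interval of existence for (\ref{4.12}) forces $x(\tau)\in S(\theta,R)$ for every $\tau\in(t_\xi,t_0]$. Part (1) is immediate: $|\xi/\phi(t_1)|=|\xi|/|\phi(t_1)|\le 2|\xi|$ because $|\phi(t_1)|\ge 1/2$, and $|\arg(\xi/\phi(t_1))|=|\arg\xi-\arg\phi(t_1)|\le|\arg\xi|+\theta_\phi$.

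For part (2) I rewrite the quantity as $p\,\xi^p\!\int_{t_1}^{t_0}\bigl(-c(\tau,x(\tau))\bigr)\phi(\tau)^{-p}\,d\tau/\tau$. For $\tau\in[t_1,t_0]$ we have $x(\tau)\in S(\theta,R)$, so $|\arg(-c(\tau,x(\tau)))|\le\epsilon_1$ by (\ref{4.11}) and $|\arg\phi(\tau)^{-p}|\le p\theta_\phi$ by Lemma \ref{Lemma4.9}; hence the continuous integrand is valued in the closed sector $\Sigma=\{z\in\BC:|\arg z|\le\epsilon_1+p\theta_\phi\}$, with $\epsilon_1+p\theta_\phi<\pi/3$. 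A closed sector of half-angle $<\pi/2$ is a convex cone, hence stable under addition and nonnegative scaling, hence under integration of continuous $\Sigma$-valued functions; and the integral is nonzero since its real part $\int_{t_1}^{t_0}\mathrm{Re}\bigl((-c(\tau,x(\tau)))\phi(\tau)^{-p}\bigr)d\tau/\tau$ is strictly positive ($-c$ does not vanish on the domain). Thus the integral lies in $\Sigma$; multiplying by $\xi^p$, whose argument is $p\arg\xi$ since $|\arg\xi|<\theta<\pi/2p$, and using $p|\arg\xi|+\epsilon_1+p\theta_\phi<\pi$, yields the bound $\le p|\arg\xi|+\epsilon_1+p\theta_\phi$.

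For part (3) I plug into (\ref{4.16}). Set $I=-p\,\xi^p\!\int_{t_1}^{t_0}c(\tau,x(\tau))\phi(\tau)^{-p}\,d\tau/\tau$, so that $x(t_1)=(\xi/\phi(t_1))(1+I)^{-1/p}$. By part (2) and the hypothesis, $|\arg I|\le\pi/2$, whence $\mathrm{Re}\,I\ge 0$ and $|1+I|\ge 1$; meanwhile $|c|\le C_0$ and $|\phi|\ge 1/2$ give $|I|\le p|\xi|^pC_0 2^p\log(t_0/t_1)$, so $|1+I|\le 1+p|\xi|^pC_0 2^p\log(t_0/t_1)$. Combining these with part (1) in $|x(t_1)|=|\xi/\phi(t_1)|/|1+I|^{1/p}$ gives (\ref{4.17}). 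For (\ref{4.18}), $\arg x(t_1)=\arg(\xi/\phi(t_1))-\tfrac1p\arg(1+I)$; since $\mathrm{Re}\,I\ge 0$, replacing $I$ by $1+I$ does not increase $|\arg(\cdot)|$, so $|\arg(1+I)|\le|\arg I|\le p|\arg\xi|+\epsilon_1+p\theta_\phi$ by part (2), and with part (1) we obtain $|\arg x(t_1)|\le(|\arg\xi|+\theta_\phi)+\tfrac1p(p|\arg\xi|+\epsilon_1+p\theta_\phi)=2|\arg\xi|+2\theta_\phi+\epsilon_1/p$.

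The crux is part (2): its content is that the argument of the integral of a function valued in a narrow sector stays in that sector, which is exactly the convex-cone observation; the care required is to check that the integrand's argument really is under control — this is where $x(\tau)\in S(\theta,R)$ on the maximal interval enters — and that the integral does not vanish, so that its argument is defined. Granted part (2), part (3) needs only the elementary fact that $|\arg(1+I)|\le|\arg I|$ when $\mathrm{Re}\,I\ge 0$, plus bookkeeping, and part (1) is trivial.
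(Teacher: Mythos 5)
Your proposal is correct and follows essentially the same route as the paper: part (1) from Lemma \ref{Lemma4.9}, part (2) by observing that the integrand lies in a closed convex sector of half-angle at most $\pi/2$ (hence the integral does too), and part (3) by reading the modulus and argument bounds off (\ref{4.16}) using ${\rm Re}\,I\ge 0$, $|1+I|\ge 1$ and $|1+I|\le 1+|I|$. The only (harmless) cosmetic differences are that you factor $\xi^p$ out before invoking sector-additivity and you justify $|\arg(1+I)|\le|\arg I|$ directly rather than by a second appeal to the sector argument; you also make explicit the non-vanishing of the integral, which the paper leaves implicit.
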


\begin{proof}
    (1) follows from Lemma \ref{Lemma4.9}. By (\ref{4.11}) and 
(\ref{4.15}) we have $|\arg(-c(t,x))| \leq \epsilon_1$ and 
$|\arg(1/\phi(t)^p)| \leq p \theta_{\phi}$. Therefore, we have
$$
    \Bigl|\arg \Bigl(- p \xi^p 
           \frac{c(\tau,x(\tau))}{\phi(\tau)^p} \Bigr) \Bigr|
     \leq p |\arg \xi| + \epsilon_1 + p\theta_{\phi}.
$$
If $p|\arg \xi| + \epsilon_1 + p\theta_{\phi} \leq \pi/2$ holds, 
the set $\{z \in \BC \setminus \{0\} \,;\, |\arg z| \leq 
            p|\arg \xi| + \epsilon_1 + p\theta_{\phi} \}$ is 
closed with respect to the addition. This proves (2).
\par
   Let us show (3). We know that $|\xi/\phi(t_1)| \geq |\xi|/2$.
Since
\begin{align*}
    &\Bigl|1- p \xi^p\int_{t_1}^{t_0} 
           \frac{c(\tau,x(\tau))}{\phi(\tau)^p}
         \frac{d \tau}{\tau} \Bigr) \Bigr| 
         \leq 1+p|\xi|^p \int_{t_1}^{t_0}
         \frac{|c(\tau,x(\tau))|}{|\phi(\tau)|^p}
         \frac{d \tau}{\tau} \\
    &\leq  1+p|\xi|^p \int_{t_1}^{t_0}
         C_0 2^p \frac{d \tau}{\tau}
      = 1+p|\xi|^p C_0 2^p \log(t_0/t_1),
\end{align*}
we have the first inequality of (\ref{4.17}).
\par
   If $p|\arg \xi| + \epsilon_1 + p\theta_{\phi} \leq \pi/2$
holds, by (2) we have
\begin{equation}\label{4.19}
    {\rm Re} \Bigl(- p \xi^p\int_{t_1}^{t_0} 
           \frac{c(\tau,x(\tau))}{\phi(\tau)^p}
         \frac{d \tau}{\tau} \Bigr) \geq 0 
\end{equation}
and so we have
\[
    {\rm Re} \Bigl(1- p \xi^p\int_{t_1}^{t_0} 
           \frac{c(\tau,x(\tau))}{\phi(\tau)^p}
         \frac{d \tau}{\tau} \Bigr) \geq 1
\]
which yields
\[
    \biggl|\Bigl(1- p \xi^p\int_{t_1}^{t_0} 
           \frac{c(\tau,x(\tau))}{\phi(\tau)^p}
         \frac{d \tau}{\tau} \Bigr)^{1/p} \biggr| \geq 1.
\]
By combining this with (1) we have $|x(t_1)| \leq 2|\xi|$.
\par
   Similarly, by (\ref{4.19}) and the result (2) we have
\[
    \biggl|\arg \Bigl(1- p \xi^p\int_{t_1}^{t_0} 
           \frac{c(\tau,x(\tau))}{\phi(\tau)^p}
         \frac{d \tau}{\tau} \Bigr) \biggr|
      \leq p|\arg \xi| + \epsilon_1 + p\theta_{\phi}.
\]
Hence, we have
\begin{align*}
   &|\arg x(t_1)| \\
   &\leq |\arg \xi| + |\arg \phi(t_1)|
     + \frac{1}{p} \biggl| \arg\Bigl(1- p \xi^p\int_{t_1}^{t_0} 
           \frac{c(\tau,x(\tau))}{\phi(\tau)^p}
         \frac{d \tau}{\tau} \Bigr) \biggr| \\
     &\leq |\arg \xi| + \theta_{\phi} +\frac{1}{p}
         (p|\arg \xi| + \epsilon_1 + p\theta_{\phi})
      = 2|\arg \xi| + 2\theta_{\phi} +\epsilon_1/p.
\end{align*}
This proves (\ref{4.18}). 
\end{proof}

\par
\medskip
   {\bf Step 5.} We recall that $0<\theta<\theta_0 <\pi/2p$
holds. By summing up we have

\begin{lem}\label{Lemma4.11}
    If $\xi \in S(\theta/3,R/3)$ we have $t_{\xi}=0$.
\end{lem}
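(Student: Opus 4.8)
The plan is to argue by contradiction in exactly the style of Corollary \ref{Corollary2.9}: assume $\xi \in S(\theta/3,R/3)$ but $t_\xi > 0$, show that the characteristic $x(t)$ from (\ref{4.12}) then stays in a fixed compact subset of $(0,\sigma)\times S(\theta,R)$, and conclude via the continuation theorem for ordinary differential equations (Theorem 4.1 in Coddington--Levinson \cite{CL}) that $x(t)$ can be extended past $t_\xi$, contradicting the maximality of $(t_\xi,t_0]$. All the ingredients are the estimates collected in Lemmas \ref{Lemma4.7}--\ref{Lemma4.10}.

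First I would check that the angular hypothesis $p|\arg\xi| + \epsilon_1 + p\theta_\phi \le \pi/2$ of Lemma \ref{Lemma4.10}(3) is satisfied. Since $\xi \in S(\theta/3,R/3)$ and $\theta = \eta\theta_0 < \theta_0 < \pi/2p$, we have $p|\arg\xi| < p\theta/3 < \pi/6$; moreover $\epsilon_1 < \pi/6$ by (\ref{4.11}) and $p\theta_\phi < \pi/6$ by (\ref{4.15}). Hence the sum is $< \pi/2$, so the estimates (\ref{4.17}) and (\ref{4.18}) are available for every $t_1 \in (t_\xi, t_0]$.

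Next I would read off the confinement of the curve. The upper bound in (\ref{4.17}) gives $|x(t_1)| \le 2|\xi| < 2R/3$, and (\ref{4.18}) together with $2\theta_\phi < \theta/6$ (from (\ref{4.15})) and $\epsilon_1/p < \theta/6$ (from (\ref{4.11})) gives
\[
   |\arg x(t_1)| \le 2|\arg\xi| + 2\theta_\phi + \epsilon_1/p
      < \frac{2\theta}{3} + \frac{\theta}{6} + \frac{\theta}{6} = \theta .
\]
Thus $R_1 := 2|\xi|$ and $\theta_1 := 2|\arg\xi| + 2\theta_\phi + \epsilon_1/p$ are fixed numbers with $R_1 < R$ and $\theta_1 < \theta$. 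For the lower bound I would invoke the contradiction hypothesis $t_\xi > 0$: then $\log(t_0/t_1) \le \log(t_0/t_\xi) < \infty$ for $t_1 \in (t_\xi,t_0]$, so the first inequality of (\ref{4.17}) produces a positive constant $c_0$ with $|x(t_1)| \ge c_0$ on $(t_\xi,t_0]$, while $x(t_1) \ne 0$ there by Lemma \ref{Lemma4.8}(1). Consequently $(t,x(t))$ stays in the compact set $[t_\xi,t_0] \times K$ with $K = \{x \in \BC \,;\, c_0 \le |x| \le R_1,\ |\arg x| \le \theta_1\}$, which is contained in $(0,\sigma)\times S(\theta,R)$ because $0 < t_\xi < t_0 < \sigma$.

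Finally, since the right-hand side of (\ref{4.12}) is continuous on $(0,\sigma)\times S(\theta,R)$ and locally Lipschitz in $x$, and $(t,x(t))$ remains in the compact set $[t_\xi,t_0]\times K$, the continuation theorem lets us extend $x(t)$ to $(t_\xi-\varepsilon,t_0]$ for some $\varepsilon > 0$, contradicting maximality; hence $t_\xi = 0$. The only delicate point, which is more bookkeeping than mathematics, is the tracking of the angular and modular constants: the factor $1/3$ in passing from $S(\theta,R)$ to $S(\theta/3,R/3)$, combined with the thresholds $\theta/12$ and $p\theta/6$ built into Lemmas \ref{Lemma4.7}--\ref{Lemma4.10}, is exactly what forces $R_1 < R$ and $\theta_1 < \theta$; everything else is the familiar ``assume $t_\xi > 0$, confine to a compact set, extend, contradict'' scheme already used in Corollary \ref{Corollary2.9}.
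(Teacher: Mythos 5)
Your proof is correct and follows essentially the same route as the paper: verify the angular hypothesis of Lemma \ref{Lemma4.10}(3), use (\ref{4.17})--(\ref{4.18}) together with the bounds $2\theta_\phi<\theta/6$ and $\epsilon_1/p<\theta/6$ to confine the characteristic to a compact subset of $S(\theta,R)$ (the positive lower bound on $|x(t_1)|$ coming from $\log(t_0/t_\xi)<\infty$ under the assumption $t_\xi>0$), and then contradict maximality via the ODE continuation theorem. The only difference is notational (the paper uses $R_1$ for the lower radius rather than the upper one).
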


\begin{proof}
    Let $\xi \in S(\theta/3,R/3)$. Suppose that 
$t_{\xi}>0$, and let us derive a contradiction. We note:
\begin{align*}
    p|\arg \xi| + \epsilon_1 + p\theta_{\phi}
    &< p (\theta/3)+  p\min\{\theta/6, \pi/6p \}
             +p \min\{\theta/12, \pi/6p \} \\
    &< p (\pi/6p)+  p(\pi/6p) +p(\pi/6p) =\pi/2.
\end{align*}
Therefore, by (3) of Lemma \ref{Lemma4.10} we have
\begin{align}
   &R_1=\frac{|\xi|/2}
       {\Bigl(1+p|\xi|^p C_0 2^p \log(t_0/t_{\xi}) \Bigr)^{1/p}}
          \leq |x(t_1)| \leq 2|\xi|<2R/3, \label{4.20}\\
   &|\arg x(t_1)| 
      \leq 2|\arg \xi| + 2\theta_{\phi} +\epsilon_1/p 
    \leq 2(\theta/3) + 2\theta_{\phi} +\epsilon_1/p.
            \label{4.21}
\end{align}
If we set $\theta_1=2(\theta/3) + 2\theta_{\phi} +\epsilon_1/p$,
we have $\theta_1<2(\theta/3) + 2(\theta/12) +\theta/6=\theta$ and so
we see that 
the set $K=\{x \in S(\theta,R) \,;\, R_1 \leq |x| \leq 2R/3,
   |\arg x| \leq \theta_1 \}$ is a compact subset of $S(\theta,R)$.
\par
   By (\ref{4.20}) and (\ref{4.21}) we have $x(t_1) \in K$ for any 
$t_1 \in (t_{\xi},t_0]$. Therefore, we can conclude that $x(t)$ can 
be extended to an interval $(t_{\xi}-\varepsilon,t_0]$ for some 
$\varepsilon>0$. This contradicts the condition that $(t_{\xi},t_0]$ 
is a maximal interval of the existence of the solution $x(t)$.  
\end{proof}

\par
\medskip
   {\bf Step 6.} Since $t_{\xi}=0$, by (\ref{4.13}) with $\tau=t_0$ 
we have
\[
    |w^*(t_0)| \leq \Bigl( \frac{t_1}{t_0} \Bigr)^a|w^*(t_1)|
                  \leq \Bigl( \frac{t_1}{t_0} \Bigr)^a r_1
\]
for any $t_1 \in (0,t_0)$. Since $r_1>0$ is independent of $t_1$,
by letting $t_1 \longrightarrow +0$ we have $w^*(t_0)=0$.
Since $w^*(t_0)=w(t_0,\xi)$ we have $w(t_0,x)=0$ for any
$x \in S(\theta/3,R/3)$. Since $t_0 \in (0,\sigma)$ is taken 
arbitrarily we have $w(t,x)=0$ on 
$(0,\sigma) \times S(\theta/3,R/3)$.
\par
   This completes the proof of Proposition \ref{Proposition4.5}
\end{proof}

\par
\vspace{2mm}
%
%

%
%
%
%
%
\end{document}